\newtheorem{theorem}{Theorem}
\newtheorem{definition}{Definition}
\newtheorem{example}{Example}
\newtheorem{property}{Property}
\newtheorem{corollary}{Corollary}
\newtheorem{assumption}{Assumption}
\newcommand{\R}{\mathbb{R}}
\newcommand{\T}{\mathcal{T}}
\newcommand{\norm}[1]{\left\lVert#1\right\rVert}
\newcommand{\itp}{\hat{\otimes}_\varepsilon}
\title{A hybrid iterative method based on MIONet for PDEs: Theory and numerical examples\footnote{The research is supported by NSFC project 12288101.}}
\author{Jun Hu\footnote{School of Mathematical Sciences, Peking University, Beijing 100871, China (hujun@math.pku.edu.cn).}\ \ and\ Pengzhan Jin\footnote{School of Mathematical Sciences, Peking University, Beijing 100871, China (jpz@pku.edu.cn).}
}
\date{}
\begin{document}
	
	\maketitle
\begin{abstract}
We propose a hybrid iterative method based on MIONet for PDEs, which combines the traditional numerical iterative solver and the recent powerful machine learning method of neural operator, and further systematically analyze its theoretical properties, including the convergence condition, the spectral behavior, as well as the convergence rate, in terms of the errors of the discretization and the model inference. We show the theoretical results for the frequently-used smoothers, i.e. Richardson (damped Jacobi) and Gauss-Seidel. We give an upper bound of the convergence rate of the hybrid method w.r.t. the model correction period, which indicates a minimum point to make the hybrid iteration converge fastest. Several numerical examples including the hybrid Richardson (Gauss-Seidel) iteration for the 1-d (2-d) Poisson equation are presented to verify our theoretical results, and also reflect an excellent acceleration effect. As a meshless acceleration method, it is provided with enormous potentials for practice applications.
\end{abstract}

\section{Introduction}
Partial differential equations (PDEs) play a crucial role in describing physical reality and modeling engineering problems. As the vast majority of PDEs have no computable analytical solutions, seeking numerical solutions is an important issue. There have been many classical numerical methods developed for solving PDEs, such as the finite element method (FEM) \cite{brenner2008mathematical}, the finite difference method (FDM) \cite{smith1985numerical}, the finite volume method \cite{eymard2000finite}, and the spectral method \cite{shen2011spectral}. With these methods, linear PDEs are usually discretized as linear algebraic systems, and subsequently be solved by direct methods (e.g. Gaussian elimination), or iterative methods such as Richardson, (damped) Jacobi, Gauss-Seidel, SOR, SSOR \cite{trefethen2022numerical}. When the scale of the problem is very large, iterative methods show significant advantages like low memory requirements. However, a large scale system results in a long solving time, which is unacceptable. To accelerate the iteration, multilevel iterative methods are developed \cite{hackbusch2013multi,trottenberg2000multigrid,xu1989theory}, a multigrid solver will greatly reduce the steps of iteration and lead to fast convergence. The shortcoming of the multigrid method is that it is mesh-depended, and requires users to generate several levels of meshes, which may be a very heavy task. In order to overcome this drawback, the algebraic multigrid (AMG) methods \cite{brandt1986algebraic,mandel1988algebraic,ruge1987algebraic,xu2017algebraic} are proposed, which deal with the algebraic system directly without geometric meshes. To further promote the development of fast and low usage cost iterative methods, in this work we propose a hybrid iterative method by introducing neural network-based tools.

The rise of neural networks (NNs) brings a huge impact on searching numerical solutions of PDEs. It has been shown that NNs have powerful expressivity \cite{e2019barron,siegel2022high}, thus NNs are considered as universal approximators for the solutions of PDEs. Mainstream neural networks-based solving methods for PDEs are optimizing the parameters of such NNs to minimize the corresponding loss functions constructed by strong or variational forms of PDEs, e.g. the PINNs \cite{karniadakis2021physics,lu2021physics,raissi2019physics} , the deep Ritz method \cite{yu2018deep}, and the deep
Galerkin method \cite{sirignano2018dgm}. These methods show the possibility for solving high-dimensional problems, which is indeed a challenge for traditional methods due to the curse of dimensionality. As meshless methods, they are easily implemented and friendly for coders. The drawbacks of such methods are also obvious. On the one hand, these methods are usually hard to obtain solutions as accurate as the traditional methods like FEM, on the other hand, one needs to restart a training process once some parameters of the given PDE problem (e.g. the right-hand side term of Poisson equation) are changed, which is costly to deal with parametric PDE problems. In recent years, a new research field called neural operator, is emerging to achieve the fast solving for PDEs based on neural networks. The pioneer works are the DeepONet \cite{lu2019deeponet,lu2021learning} and the FNO \cite{li2020fourier,li2020neural}, they directly learn the solution operators of PDEs, for example, the mapping from the right-hand side of Poisson equation to its solution. These methods present a new landscape for understanding physical reality, one may study the causality via a data-driven manner besides the PDE model. Recently numerous neural operators are proposed to achieve higher performance \cite{gupta2021multiwavelet,he2023mgno,jiang2023fourier,jin2022mionet,rahman2022u,raonic2023convolutional,wu2023solving}, as this field is developing rapidly. There are also theoretical works for the above neural operators including error analysis \cite{kovachki2021universal,lanthaler2022error,marcati2023exponential}.

The most attractive neural operator for this work is the multiple-input operator network (MIONet) \cite{jin2022mionet}. MIONet firstly faces up to the problems with multiple inputs, and generalizes the architecture and the theory of DeepONet by the tool of tensor decomposition. Such architecture also induces the tensor-based models for solving eigenvalue problems \cite{hu2023experimental, wang2022tensor}. An important characteristic of DeepONet/MIONet series architectures is that they encode the output functions as neural networks, rather than discrete vectors. This feature leads to efficient interpolation for the predicted solutions, and also allows the computing of derivatives of output functions during training, which prompts the work of physics-informed DeepONets \cite{wang2021learning} that learns the solution operator without any input-output pairs of data, by training a PINN-style loss. Another interesting work based on DeepONet is the HINTS \cite{zhang2022hybrid}, which in fact inspires this work. Almost all of the works of neural operators use a pure machine learning-style to solve the PDE problems, and they break away from traditional PDE numerical methods. The neural operators are  developed for fast solving, but usually have limited prediction accuracy, while the traditional numerical methods can provide solutions as accurate as needed at the cost of long solving time. Surprisingly, HINTS combines the two methods, i.e. the traditional numerical solver and the neural operator. It has been shown a spectral bias/frequency principle of the training behavior of NNs in \cite{rahaman2019spectral,xu2019frequency}, i.e. the NNs often fit functions from low to high frequency during the training. Based on this feature, \cite{huang2020int} employs the deep learning solutions (e.g. PINNs) to initialize the iterative methods for PDEs. HINTS further utilizes the spectral bias, where a pre-trained DeepONet is periodically used to eliminate the low-frequency components of the error during the iteration process, thus achieve acceleration since the smoothers (e.g. Gauss-Seidel) are difficult to deal with the low-frequency errors. This idea is similar to the mulltigrid method. A significant difference is that HINTS does not require the users to generate meshes, and the cost of this method has been transferred to the developers who are responsible for the pre-trained DeepONets.

\textbf{Contributions.} As HINTS studies the hybrid method empirically, the lack of theoretical guidance leads to the imperfection of this method. In this work, we propose a hybrid iterative method based on MIONet. We theoretically show that MIONet is a more reasonable framework to complete the hybrid iterative method. An ideal operator regression model for the hybrid method is expected to satisfy the following three key points:
\begin{itemize}
\item[1.] \textbf{Supporting multiple inputs.} Taking the Poisson equation
\begin{equation}\label{eq:intro_poisson}
	\begin{cases}
		-\nabla\cdot(k\nabla u)=f &\quad \mbox{in} \; \Omega,
		\\
		u = 0 &\quad \mbox{on} \; \partial\Omega,
	\end{cases}	
\end{equation}
as an example, its solution operator $u=\mathcal{G}(k,f)$ has two inputs, hence the chosen operator regression model should efficiently deal with such multiple-input case.
\item[2.] \textbf{Separating linearity and nonlinearity.} Since the solution operator $\mathcal{G}(k,f)$ of (\ref{eq:intro_poisson}) is linear w.r.t. $f$ but nonlinear w.r.t. $k$, the operator regression model $\mathcal{M}(k,f)$ is also expected to be linear w.r.t. $f$ and nonlinear w.r.t. $k$. The linearity on $f$ will guarantee the convergence of the hybrid iterative method which will be discussed later.
\item[3.] \textbf{Supporting efficient interpolation.} After obtaining the predicted solution by operator regression model $\mathcal{M}$, we have to compute the values of $\mathcal{M}(k,f)(\cdot)$ at the grid points which are independent of the model $\mathcal{M}$, so that the model $\mathcal{M}$ should support efficient interpolation.
\end{itemize}
Fortunately, we find that MIONet satisfies all the three points perfectly, while other models do not (note that MIONet is a high-level framework, and one may design a targeted architecture for each branch net on a specific problem, rather than directly use FNNs.). With the proposed MIONet-based hybrid iterative method, we systematically analyze its theoretical properties, including the convergence condition, the spectral behavior, as well as the convergence rate, in terms of the errors of the discretization and the model inference. We show the theoretical results for the frequently-used smoothers, i.e. Richardson (damped Jacobi) and Gauss-Seidel. We give an upper bound of the convergence rate of the hybrid method w.r.t. the model correction period, which indicates a minimum point to make the hybrid iteration converge fastest. Several numerical examples including the Richardson (Gauss-Seidel) iteration for the 1-d (2-d) Poisson equation are shown to verify our theoretical results.

This paper is organized as follows. We first briefly introduce the operator regression model MIONet as the necessary tool in Section \ref{sec:mionet}. In Section \ref{sec:him}, we present the algorithm of the proposed hybrid iterative method in detail. We then comprehensively analyze the theoretical properties of this method in Section \ref{sec:theory}. The numerical experiments are shown in Section \ref{sec:experiments}. Finally, Section \ref{sec:conclusions} summarizes this work.
	
\section{Operator regression with MIONet}\label{sec:mionet}
Let us simply introduce the operator regression model MIONet, which is a necessary tool for the later discussion of the hybrid iterative method. All the content of this section can be found in the MIONet paper \cite{jin2022mionet}, and the readers may refer to that paper for more details if interested. The readers familiar with MIONet may skip this section.

Let $X_1,X_2,...,X_n$ and $Y$ be $n+1$ Banach spaces, and $K_i \subset X_i$ ($i=1,...,n$) is a compact set. Now we aim to learn a continuous operator
\begin{equation*}
    \mathcal{G}: K_1\times \cdots\times K_n \to Y,\quad (v_1, ..., v_n) \mapsto u,
\end{equation*}
where $v_i\in K_i$ and $u=\mathcal{G}(v_1,...,v_n)$. Such $\mathcal{G}$ form the space $C(K_1\times \cdots\times K_n,Y)$ which is studied below. Note that $Y$ could be any Banach space. We usually consider $Y$ as a function space, such as $Y=C(K_0),L^2(K_0),H^1(K_0)$ for a compact domain $K_0$.

To deal with the inputs from infinite-dimensional Banach spaces, it is necessary to firstly project them onto finite-dimensional spaces. Here we utilize the tools of Schauder basis and canonical projections.

\begin{definition}[Schauder basis]
Let $X$ be an infinite-dimensional normed linear space. A sequence $\{e_i\}_{i=1}^{\infty}$ in $X$ is called a Schauder basis of $X$, if for every $x\in X$ there is a unique sequence of scalars $\{a_i\}_{i=1}^{\infty}$, called the coordinates of $x$, such that $$x=\sum_{i=1}^{\infty}a_i e_i.$$
\end{definition}
We show two useful examples of Schauder basis as follows.
\begin{example} \label{exa:faber}
\textbf{Faber-Schauder basis of $C[0,1]$.} Given distinct points $\{t_i\}_{i=1}^{\infty}$ which is a dense subset in $[0,1]$ with $t_1=0$, $t_2=1$. Let $e_1(t)=1$, $e_2(t)=t$, and $e_{k+1}$ is chosen as an element, such that ${e_1,...,e_k,e_{k+1}}$ is a basis of the $(k+1)$-dimensional space which consists of all the piecewise linear functions with grid points $\{t_i\}_{i=1}^{k+1}$.
\end{example}
\begin{example}
\textbf{Fourier basis of $L^2[0,1]$.} Any orthogonal basis in a separable Hilbert space is a Schauder basis.
\end{example}

We denote the coordinate functional of $e_i$ by $e_i^*$, and thus
\begin{equation*}
    x=\sum_{i=1}^{\infty}e_i^*(x)e_i,\quad \forall x\in X.
\end{equation*}
Then for a positive integer $n$, the canonical projection $P_n$ is defined as
\begin{equation*}
    P_n(x)=P_n \left(\sum_{i=1}^{\infty}e_i^*(x)e_i\right) = \sum_{i=1}^{n}e_i^*(x)e_i.
\end{equation*}

We have the following property for $P_n$, according to which, we can represent points in an infinite-dimensional Banach space by finite coordinates within a sufficiently small projection error.

\begin{property}[Canonical projection] \label{pro:projection}
Assume that $K$ is a compact set in a Banach space $X$ equipped with a Schauder basis and corresponding canonical projections $P_n$, then we have
\begin{equation*}
    \lim_{n\to\infty}\sup_{x\in K}\norm{x-P_n(x)}=0.\\
\end{equation*}
\end{property}

For convenience, we decompose the $P_n$ as
\begin{equation*}
    P_n=\psi_n\circ\varphi_n,
\end{equation*}
where $\varphi_n:X\to\R^n$ and $\psi_n:\R^n\to X$ are defined as
\begin{equation*}
    \varphi_n(x) = \left(e_1^*(x),...,e_n^*(x)\right)^T,\quad\psi_n(\alpha_1,...,\alpha_n)=\sum_{i=1}^{n}\alpha_ie_i.
\end{equation*}
The $\varphi_n(x)$ are essentially the truncated coordinates for $x$. Moreover, sometimes we can further replace $\{e_1,...,e_n\}$ with an equivalent basis for the decomposition of $P_n$, i.e.,
\begin{equation*}
    \hat{\varphi}_n(x)=Q(e_1^*(x),...,e_n^*(x))^T,\quad\hat{\psi}_n(\alpha_1,...,\alpha_n)=(e_1,...,e_n)Q^{-1}(\alpha_1,...,\alpha_n)^T,
\end{equation*}
with a nonsingular matrix $Q\in\R^{n\times n}$. For example, when applying the Faber-Schauder basis (Example \ref{exa:faber}), instead of using the coordinates based on the sequence $\{e_i\}_{i=1}^\infty$, we adopt the function values evaluated at certain grid points as the coordinates, which is the same as the linear element basis in the finite element method.

By considering the canonical projection property and the injective tensor product decomposition
\begin{equation}\label{eq:key}
    C(K_1\times K_2\times\cdots\times K_n,Y)\cong C(K_1)\itp C(K_2)\itp\cdots\itp C(K_n)\itp Y,
\end{equation}
we can easily obtain the following approximation result.
\begin{theorem} \label{thm:approximation}
Suppose that $X_1,...,X_n,Y$ are Banach spaces, $K_i\subset X_i$ are compact sets, and $X_i$ have a Schauder basis with canonical projections $P_q^i=\psi_q^i\circ\varphi_q^i$ (truncate the previous q terms). Assume that $\mathcal{G}:K_1\times\cdots\times K_n\to Y$ is a continuous operator, then for any $\epsilon>0$, there exist positive integers $p,q_i$, continuous functions $g_{j}^i\in C(\R^{q_{i}})$, $u_j\in Y$, such that
\begin{equation}\label{eq:elementwise}
\sup_{v_i\in K_i}\norm{\mathcal{G}(v_1,...,v_n)-\sum_{j=1}^{p}g_j^1(\varphi_{q_1}^1(v_1))\cdots g_j^n(\varphi_{q_n}^n(v_n))\cdot u_j}_{Y}<\epsilon.
\end{equation}
In addition, if $\mathcal{G}$ is linear with respect to $v_i$, then linear $g_j^i$ is sufficient.
\end{theorem}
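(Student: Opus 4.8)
The plan is a two-stage approximation: first replace $\mathcal{G}$ by a finite-rank operator whose coefficient functions are genuine continuous functions on the infinite-dimensional compact sets $K_i$, and then compress each of these functions so that it depends only on finitely many coordinates of its argument. \emph{Stage 1 (tensor-product reduction).} Viewing $\mathcal{G}$ as an element of $C(K_1)\itp\cdots\itp C(K_n)\itp Y$ through the identification $(\ref{eq:key})$ and using that the algebraic tensor product is dense in its completed injective tensor product, we approximate $\mathcal{G}$ to within $\epsilon/2$ by a finite sum of elementary tensors. Since $(\ref{eq:key})$ sends $f^1\otimes\cdots\otimes f^n\otimes u$ to the operator $(v_1,\dots,v_n)\mapsto f^1(v_1)\cdots f^n(v_n)\,u$ and identifies the injective tensor norm with the sup norm on $C(K_1\times\cdots\times K_n,Y)$, this produces $p\in\mathbb{N}$, functions $f_j^i\in C(K_i)$ and elements $u_j\in Y$ with
\begin{equation*}
\sup_{v_i\in K_i}\norm{\mathcal{G}(v_1,\dots,v_n)-\sum_{j=1}^{p}f_j^1(v_1)\cdots f_j^n(v_n)\,u_j}_Y<\frac{\epsilon}{2}.
\end{equation*}
We fix these data and abbreviate $M:=\max_{i,j}\sup_{v_i\in K_i}|f_j^i(v_i)|$ and $S:=\sum_{j=1}^{p}\norm{u_j}_Y$.

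\emph{Stage 2 (finite-dimensional compression).} For each $i,j$ and each $q$, extend $f_j^i$ by the Tietze theorem to $\bar f_j^i\in C(X_i)$ and set $g_j^i:=\bar f_j^i\circ\psi_{q}^i\in C(\R^{q})$, so that $g_j^i(\varphi_{q}^i(v_i))=\bar f_j^i(P_{q}^i(v_i))$. The set $\tilde K_i:=\overline{K_i\cup\bigcup_{q\ge1}P_{q}^i(K_i)}$ is compact: by Property \ref{pro:projection} the tails $P_{q}^i(K_i)$ with $q$ large lie in an arbitrarily small neighborhood of the compact set $K_i$, while the remaining finitely many $P_{q}^i(K_i)$, together with $K_i$, are compact, so $\tilde K_i$ is a closed and totally bounded subset of the Banach space $X_i$. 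Consequently each $\bar f_j^i$ is uniformly continuous on $\tilde K_i$, and since $\sup_{v_i\in K_i}\norm{v_i-P_{q}^i(v_i)}\to0$ by Property \ref{pro:projection}, for every $\delta>0$ we can pick integers $q_1,\dots,q_n$ so that $\sup_{v_i\in K_i}|g_j^i(\varphi_{q_i}^i(v_i))-f_j^i(v_i)|<\delta$ for all $i,j$.

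\emph{Combining.} For $\delta\le1$ each factor $g_j^i\circ\varphi_{q_i}^i$ is bounded by $M+1$ on $K_i$, so a telescoping estimate on the $n$-fold products yields
\begin{equation*}
\left|g_j^1(\varphi_{q_1}^1(v_1))\cdots g_j^n(\varphi_{q_n}^n(v_n))-f_j^1(v_1)\cdots f_j^n(v_n)\right|\le n(M+1)^{n-1}\delta
\end{equation*}
for all $j$ and all $v_i\in K_i$. Choosing $\delta$ small enough that $S\cdot n(M+1)^{n-1}\delta<\epsilon/2$ — which involves no circularity, as $M$ and $S$ are fixed in Stage 1 before $\delta$ and the $q_i$ are chosen — and combining with the triangle inequality gives the desired bound $(\ref{eq:elementwise})$. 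When $\mathcal{G}$ is linear in $v_i$, one runs the same argument with all intermediate objects linear in that slot: the Stage-1 approximant can be chosen with linear $f_j^i$ by the corresponding refinement of the tensor-product density statement, each extension $\bar f_j^i$ can be taken to be a continuous linear functional on $X_i$ (Hahn--Banach), and then $g_j^i=\bar f_j^i\circ\psi_{q}^i$ is linear as a composition of linear maps.

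\emph{Main obstacle.} The heaviest functional-analytic ingredient, the injective-tensor-product identification $(\ref{eq:key})$, is supplied by the preceding discussion, so the genuine work lies in the finite-dimensional compression of Stage 2. The two points needing care are that $\varphi_{q}^i$ need not be injective on $K_i$ and that $P_{q}^i(v_i)$ need not belong to $K_i$; both are circumvented by extending the $f_j^i$ off $K_i$ and composing with $\psi_{q}^i$ instead of trying to invert $\varphi_{q}^i$, after which only the compactness of $\tilde K_i$ and the right ordering of the error constants remain. Obtaining honest linearity of the $g_j^i$ in the linear case is the remaining place where a little extra argument (Hahn--Banach plus density within the linear subspace) is required.
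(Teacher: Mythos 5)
Your main (nonlinear) argument is correct and is essentially the route the paper intends: it states the result as a consequence of Property \ref{pro:projection} and the identification (\ref{eq:key}) and defers details to the MIONet reference, and your Stage 1 is exactly the density of elementary tensors in the injective tensor product under (\ref{eq:key}), while Stage 2 is the canonical-projection compression. Your handling of the two delicate points — $\varphi_{q}^i$ not injective on $K_i$ and $P_{q}^i(K_i)\not\subset K_i$ — via Tietze extension, the compact set $\tilde K_i$, and the telescoping product estimate is sound, and the ordering of the constants ($M$, $S$ fixed before $\delta$, $q_i$) is handled correctly.

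The genuine gap is the linear addendum. First, ``the corresponding refinement of the tensor-product density statement'' is not an available off-the-shelf fact: restrictions to $K_i$ of continuous linear functionals form a far-from-dense subset of $C(K_i)$, so no generic density argument can force the $i$-th factors of the Stage-1 approximant to be linear; producing such an approximant is precisely the content of the addendum and must use the linearity of $\mathcal{G}$ itself, which your sketch never invokes. Second, the Hahn--Banach step is either unnecessary or unavailable: if Stage 1 really produced restrictions of continuous linear functionals there is nothing to extend, whereas a functional that is merely linear on $\mathrm{span}(K_i)$ and continuous on the compact set $K_i$ need not be bounded, so Hahn--Banach need not apply (in $\ell^2$, with $K=\{0\}\cup\{k^{-1}e_k\}_{k\geq1}$, the functional determined by $e_k\mapsto\sqrt{k}$ is continuous on $K$ but unbounded on $\mathrm{span}(K)$). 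A natural repair goes through the projection first rather than through tensor density: reading the hypothesis as saying that $\mathcal{G}$ is the restriction of a map defined and linear in $v_i$ on a subspace containing $K_i$ and the basis vectors (as in the paper's application, where $\mathcal{G}(k,\cdot)$ is a bounded linear operator in $f$), write $\mathcal{G}(\ldots,P_{q_i}^iv_i,\ldots)=\sum_{k\leq q_i}e_k^*(v_i)\,\mathcal{G}(\ldots,e_k,\ldots)$, note that each $e_k^*(v_i)$ is a linear function of $\varphi_{q_i}^i(v_i)$, approximate each operator $\mathcal{G}(\ldots,e_k,\ldots)$ of the remaining $n-1$ variables by the already-proved general case, and control $\mathcal{G}(\ldots,v_i,\ldots)-\mathcal{G}(\ldots,P_{q_i}^iv_i,\ldots)$ using the (equi)continuity of the linear dependence on $v_i$; this produces genuinely linear $g_j^i$ with no extension argument at all.
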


The MIONet is immediately constructed based on this theorem, where we just replace the $g_j^i$ with neural networks. Assume that now $Y=C(K_0)$ ($L^2(K_0)$, $H^1(K_0)$ etc. are also available) for a compact domain $K_0$.

\textbf{MIONet.} We construct the MIONet according to Eq.~(\ref{eq:elementwise}). Specifically, $\mathbf{g}_i=(g_1^i,...,g_p^i)^T\in C(\R^{q_i},\R^p)$ and $\mathbf{f}=(u_1,...,u_p)^T\in C(K_0,\R^p)$ are approximated by neural networks $\tilde{\mathbf{g}}_i$ (called branch net $i$) and $\tilde{\mathbf{f}}$ (called trunk net). Then the network (Figure \ref{fig:architecture_MIONet}) is
\begin{equation} \label{eq:MIONet_low}
    \tilde{\mathcal{G}}(v_1,...,v_n)(y) = \mathcal{S}\left( \underbrace{\tilde{\mathbf{g}}_1(\varphi_{q_1}^1(v_1))}_{\text{branch}_1} \odot\cdots\odot \underbrace{\tilde{\mathbf{g}}_n(\varphi_{q_n}^n(v_n))}_{\text{branch}_n} \odot \underbrace{\tilde{\mathbf{f}}(y)}_{\text{trunk}} \right) + b,
\end{equation}
where $\varphi_{q_i}^i$ truncates the previous $q_i$ coordinates w.r.t. the corresponding Schauder basis as defined in Eq. (\ref{eq:elementwise}), $\odot$ is the Hadamard product (element-wise product), e.g.
\begin{equation}
(x_1,...,x_p)\odot(y_1,...,y_p)=(x_1y_1,...,x_py_p),
\end{equation}
and $\mathcal{S}$ is the summation of all the components of a vector, $b\in\R$ is a trainable bias.

The universal approximation theorem for MIONet can be easily obtained via Theorem \ref{thm:approximation} and the universal approximation theorems for the applied neural networks (i.e. $\tilde{\mathbf{g}}_i$ and $\tilde{\mathbf{f}}$) such as FNNs.

\begin{theorem}[Universal approximation theorem for MIONet] \label{thm:UAT_MIONet}
Under the setting of Theorem \ref{thm:approximation} with $Y=C(K_0),L^2(K_0),H^1(K_0)$ etc., for any $\epsilon>0$, there exists a MIONet $\tilde{\mathcal{G}}$, such that $$\norm{\mathcal{G}-\tilde{\mathcal{G}}}_{C(K_1\times\cdots\times K_n,Y)}<\epsilon.$$
In addition, the branch nets can be chosen as linear maps (i.e. one-layer FNN without activation and bias) if $\mathcal{G}$ is linear with respect to the corresponding inputs.  
\end{theorem}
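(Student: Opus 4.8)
The plan is to combine Theorem~\ref{thm:approximation} with the standard universal approximation theorems for the neural network classes used in the branch and trunk nets, and to control the accumulated error by a routine triangle-inequality argument. First I would invoke Theorem~\ref{thm:approximation}: given $\epsilon>0$, fix $p$, $q_i$, continuous functions $g_j^i\in C(\R^{q_i})$ and $u_j\in Y$ so that the finite-rank operator
\begin{equation*}
\mathcal{G}_p(v_1,\dots,v_n)=\sum_{j=1}^p g_j^1(\varphi_{q_1}^1(v_1))\cdots g_j^n(\varphi_{q_n}^n(v_n))\cdot u_j
\end{equation*}
satisfies $\norm{\mathcal{G}-\mathcal{G}_p}_{C(K_1\times\cdots\times K_n,Y)}<\epsilon/2$. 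Collect these into vector-valued maps $\mathbf{g}_i=(g_1^i,\dots,g_p^i)^T\in C(\R^{q_i},\R^p)$ and $\mathbf{f}=(u_1,\dots,u_p)^T\in C(K_0,\R^p)$ (interpreting $u_j$ as elements of $C(K_0)$, resp. $L^2(K_0)$ or $H^1(K_0)$, as appropriate), so that $\mathcal{G}_p$ is exactly the MIONet-type contraction $\mathcal{S}(\mathbf{g}_1\odot\cdots\odot\mathbf{g}_n\odot\mathbf{f})$ with zero bias.

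Next I would replace each $\mathbf{g}_i$ and $\mathbf{f}$ by neural networks $\tilde{\mathbf{g}}_i,\tilde{\mathbf{f}}$ using the relevant universal approximation theorem for FNNs (uniform approximation on the compact sets $\varphi_{q_i}^i(K_i)\subset\R^{q_i}$ and on $K_0$; for the $L^2$ or $H^1$ target one uses the corresponding density statements for the trunk). Since $\varphi_{q_i}^i$ is continuous and $K_i$ compact, the images $\varphi_{q_i}^i(K_i)$ are compact, so the branch approximations hold uniformly; this yields $\norm{\tilde{\mathbf{g}}_i-\mathbf{g}_i}_\infty\le\delta$ and $\norm{\tilde{\mathbf{f}}-\mathbf{f}}_Y\le\delta$ for any prescribed $\delta>0$. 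Then I would estimate $\norm{\mathcal{G}_p-\tilde{\mathcal{G}}}$ by a telescoping sum: write the difference of the two products $\mathbf{g}_1\odot\cdots\odot\mathbf{g}_n\odot\mathbf{f}$ and $\tilde{\mathbf{g}}_1\odot\cdots\odot\tilde{\mathbf{g}}_n\odot\tilde{\mathbf{f}}$ as a sum of $n+1$ terms in each of which exactly one factor is swapped, bound each term using that all the $\mathbf{g}_i$ (hence, after shrinking $\delta$, also the $\tilde{\mathbf{g}}_i$) are uniformly bounded on the relevant compact sets and $\mathbf{f}$, $\tilde{\mathbf{f}}$ are bounded in $Y$, and finally apply $\mathcal{S}$, which is a fixed bounded linear functional on $\R^p$. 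Choosing $\delta$ small enough makes this contribution $<\epsilon/2$, and the triangle inequality gives $\norm{\mathcal{G}-\tilde{\mathcal{G}}}_{C(K_1\times\cdots\times K_n,Y)}<\epsilon$.

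For the linearity addendum, I would note that if $\mathcal{G}$ is linear in $v_i$ then Theorem~\ref{thm:approximation} already provides linear $g_j^i$, i.e. $g_j^i=(w_j^i)^T\varphi_{q_i}^i(\cdot)$ for some $w_j^i\in\R^{q_i}$; stacking these gives $\mathbf{g}_i(z)=W_i z$ for a matrix $W_i\in\R^{p\times q_i}$, which is precisely a one-layer FNN with no activation and no bias, and no approximation of that branch is needed (set $\tilde{\mathbf{g}}_i=\mathbf{g}_i$). The remaining branches and the trunk are approximated as before, and the same telescoping estimate goes through verbatim. The only mild technical point — and the place I would be most careful — is handling the non-$C(K_0)$ targets ($L^2$, $H^1$): one must make sure the trunk net can be taken in the right space (e.g. smooth FNNs are dense in $H^1(K_0)$ on a nice domain) and that the product/contraction map is still continuous from the product of sup-norm branch spaces and the $Y$-norm trunk space into $Y$, which holds because $\norm{c\,u}_Y=|c|\,\norm{u}_Y$ and the scalars $\prod_i \tilde g_j^i$ are uniformly bounded. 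This is exactly the observation already packaged in the isometric identification~(\ref{eq:key}) and the construction of MIONet, so no new ideas are required beyond bookkeeping.
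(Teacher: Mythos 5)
Your proposal is correct and follows essentially the same route as the paper, which obtains Theorem \ref{thm:UAT_MIONet} directly by combining Theorem \ref{thm:approximation} with the universal approximation theorems for the branch and trunk networks (the paper only sketches this, deferring details to the MIONet reference). Your telescoping estimate for the product of factors and your observation that the linear case needs no approximation of the corresponding branch are exactly the bookkeeping the paper leaves implicit.
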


Up to now, we have already shown the architecture and properties of MIONet, which will be applied to the iterative methods later.

\begin{figure}[htbp]
    \centering
    \includegraphics[width=0.9\textwidth]{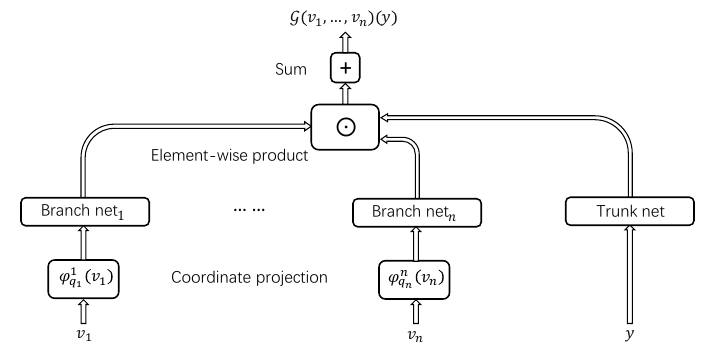}
    \caption{\textbf{An illustration of the architecture of MIONet for general operators.} All the branch nets and the trunk net have the same output dimension, whose outputs are merged together via the Hadamard product and then a summation.}
    \label{fig:architecture_MIONet}
\end{figure}

\section{A hybrid iterative method}\label{sec:him}
We present a new MIONet-based hybrid iterative method for general linear equations:
\begin{equation}
	\begin{cases}
		\mathcal{L}_{\alpha}u=f &\quad \mbox{in} \; \Omega,
		\\
		u = g &\quad \mbox{on} \; \partial\Omega,
	\end{cases}	
\end{equation}
where $\mathcal{L}_{\alpha}$ is a linear differential operator depends on $\alpha$. $\alpha=\{\alpha_1,\alpha_2,...\}$ is a finite set consisting of several parameters which can be scalars, vectors, functions, etc.. Other boundary conditions are also available.

Without loss of generality we take the well-known Poisson equation as an example. Consider
\begin{equation}\label{eq:poisson}
	\begin{cases}
		-\nabla\cdot(k\nabla u)=f &\quad \mbox{in} \; \Omega,
		\\
		u = 0 &\quad \mbox{on} \; \partial\Omega,
	\end{cases}	
\end{equation}
where $\Omega\subset\R^d$ is a bounded open set. Here $f\in L^2(\Omega), k\in L^\infty(\Omega)$, and $k\geq C$ a.e. for a constant $C>0$, so that it is a classical second-order elliptic equation. The variational formula is
\begin{equation}\label{eq:poisson_var}
\begin{split}
&{\rm Finding\ }u\in H_0^1(\Omega){\rm\ such\ that} \\
&\int_{\Omega}k\nabla u\cdot\nabla v dx=\int_{\Omega}fvdx,\quad \forall v\in H_0^1(\Omega). \\
\end{split}
\end{equation}
Now assume that $\Omega$ is a bounded polyhedral domain, $\mathcal{T}_h$ is a quasi-uniform and shape regular triangulation of $\Omega$. We apply the simple linear element space
\begin{equation}
V_h=\{v\in C(\overline{\Omega}):v|_K\in P_1(K), \forall K\in\mathcal{T}_h\}\subset V:=H_0^1(\Omega),
\end{equation}
where $P_1(K)$ denotes the function space consisting of all the linear polynomials over $K$. The finite element method is
\begin{equation}\label{eq:fem}
\begin{split}
&{\rm Finding\ }u_h\in V_h{\rm\ such\ that} \\
&\int_{\Omega}k\nabla u_h\cdot\nabla v_h dx=\int_{\Omega}fv_hdx,\quad \forall v_h\in V_h. \\
\end{split}
\end{equation}
Let $\{\phi_i\}_{i=1}^{n_h}$ be the nodal basis functions corresponding to all the interior nodes of $\mathcal{T}_h$. Then we obtain a discrete linear system for Eq. (\ref{eq:fem}) as
\begin{equation}\label{eq:discrete_sys}
A_h\mu_h=b_h,
\end{equation}
where $A_h=(\int_{\Omega}k\nabla \phi_i\cdot\nabla \phi_j dx)_{n_h\times n_h}$, $b_h=(\int_{\Omega}f\phi_idx)_{n_h\times 1}$. Denote $\mu_h=(\alpha_1,...,\alpha_{n_h})^T$, then $u_h:=\sum_{i=1}^{n_h}\alpha_i\phi_i$ is the numerical solution obtained by the above system. What we next to do is solving Eq. (\ref{eq:discrete_sys}) via a suitable iterative method.

For convenience, we temporarily omit the ``$h$'' in above notations, but we keep in mind that they indeed depend on $h$. For the discrete linear system

\begin{equation}\label{eq:discrete_sys_no_h}
A\mu=b,
\end{equation}
we consider the iterative form
\begin{equation}
\mu^{(m+1)}=\mu^{(m)}+B(b-A\mu^{(m)}),\quad \mu^{(0)}=0,
\end{equation}
where $B$ is an approximation of $A^{-1}$. The frequently-used choices of $B$ are

\begin{numcases}
{B=}
(\omega^{-1} I)^{-1}, & Richardson\label{eq:Richardson} \\
(\omega^{-1} D)^{-1}, & Jacobi (damped)\label{eq:Jacobi} \\
(\omega^{-1} D + L)^{-1}, & Gauss-Seidel (SOR)\label{eq:GS}
\end{numcases}
where $A=D+L+U$, $D$ is the diagonal of $A$, $L$ and $U$ are the strictly lower and upper triangular
parts of $A$, respectively. The $\omega$ is usually considered as the relaxation parameter to be chosen. By performing a suitable iterative method, we will get a numerical solution $\mu$ for system (\ref{eq:discrete_sys_no_h}).

Briefly, our hybrid iterative method is just inserting some inference steps via MIONet into the iterative process. Now we return to the operator regression model MIONet. For the problem (\ref{eq:poisson}), what we are concerned with is in fact the solution operator
\begin{equation}
\mathcal{G}:(k,f)\mapsto u.
\end{equation}
Now assume that we have a dataset as $\mathcal{T}=\{(k_i,f_i),u_i\}_{i=1}^N$ satisfying $\mathcal{G}(k_i,f_i)=u_i$. We pre-train a MIONet, denoted by $\mathcal{M}$, on this dataset with loss function
\begin{equation}
L(\theta)=\frac{1}{N}\sum_{i=1}^N\|\mathcal{M}(k_i,f_i;\theta)-u_i\|_{Y}^2,
\end{equation}
where $Y$ could be $C(\overline{\Omega})$, $L^2(\Omega)$, $H^1(\Omega)$, etc.. Note that here we set $\mathcal{M}$ linear with respect to $f$ due to Theorem \ref{thm:approximation}. An illustration is presented in Figure \ref{fig:architecture_MIONet_Poisson}. 
\begin{figure}[htbp]
    \centering
    \includegraphics[width=0.9\textwidth]{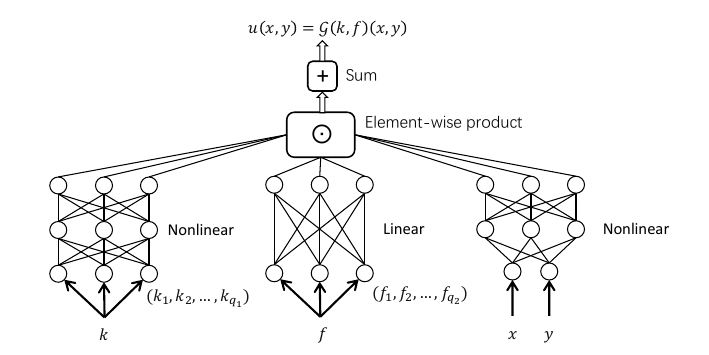}
    \caption{\textbf{An illustration of the architecture of MIONet for the 2-d Poisson equation.}}
    \label{fig:architecture_MIONet_Poisson}
\end{figure}
Denote $r^{(m)}:=b-A\mu^{(m)}$, which is the residual vector of $m$-th step. We expect to find a residual function related to the residual vector $r^{(m)}$. Define the residual function $\bar{r}^{(m)}:=\sum_{i=1}^{n}\beta_i\phi_i$, where $\beta=(\beta_1,...,\beta_n)^{T}$ satisfies
\begin{equation}\label{eq:sub_sys}
H\beta=r^{(m)},\quad H:=\left(\int_{\Omega}\phi_i\phi_jdx\right)_{n\times n}.
\end{equation}
Specially, we let $\bar{r}^{(0)}=f$. Note that $\bar{r}^{(m)}$ is linear with respect to $r^{(m)}$, we denote their relationship by a linear operator $\mathcal{H}$ as
\begin{equation}
\bar{r}^{(m)}=\mathcal{H}r^{(m)},\quad \mathcal{H}:=(\phi_1,...,\phi_n)H^{-1},\quad m\geq1.
\end{equation}
We further define the interpolation operator as
\begin{equation}
\mathbb{I}(u):=(u(x_1),...,u(x_{n}))^{T},
\end{equation}
where $x_i\in\Omega$ is the corresponding nodal coordinate of $\phi_i$, i.e. $\phi_i(x_j)=\delta_i^j$ (Dirac delta function).

Then the hybrid iterative method can be written as
\begin{numcases}
{\mu^{(m+1)}=}
\mu^{(m)}+\mathbb{I}(\mathcal{M}(k,\mathcal{H}(b-A\mu^{(m)}))), & $m\equiv 0{\rm\ mod\ }M$\label{eq:him_m}  \\
\mu^{(m)}+B(b-A\mu^{(m)}), & otherwise \label{eq:him_b}
\end{numcases}
where $M$ is a positive integer to be chosen, we regard it as the correction period. The algorithm is summarized in Algorithm \ref{alg:him}.

\begin{algorithm}
\caption{Hybrid iterative method for Poisson equation}
\label{alg:him}
\begin{algorithmic}
\REQUIRE{Parameters $k,f$, mesh $\mathcal{T}_h$, iterative method $B$, \\ pre-trained MIONet $\mathcal{M}$, and correction period $M$}
\ENSURE{Numerical solution $\mu$}
\STATE{1. Obtain a discrete system 
\begin{equation*}
A\mu=b
\end{equation*}
based on $\mathcal{T}_h$ for Eq. (\ref{eq:poisson}).}
\STATE{2. Initialize $\mu^{(1)}=\mathbb{I}(\mathcal{M}(k,f))$, $r^{(1)}=b-A\mu^{(1)}$, $m=1$}
\STATE{3. \textbf{while} ($\mu^{(m)}$ does not satisfy the convergence condition) \\
\ \quad\quad\quad\textbf{if} ($m\equiv 0{\rm\ mod\ }M$) \\
\ \quad\quad\quad\quad\quad$\mu^{(m+1)}=\mu^{(m)}+\mathbb{I}(\mathcal{M}(k,\mathcal{H}r^{(m)}))$ \\
\ \quad\quad\quad \textbf{else}\\
\ \quad\quad\quad\quad\quad$\mu^{(m+1)}=\mu^{(m)}+Br^{(m)}$\quad(in practice we update $\mu^{(m)}$ in-place without computing $B$)\\
\ \quad\quad\quad $r^{(m+1)}=b-A\mu^{(m+1)}$ \\
\ \quad\quad\quad $m=m+1$\quad\quad\quad\quad\quad\quad\quad\quad (set $m$ to $m+1$) \\
\ \quad\textbf{end}}
\STATE{4. \textbf{return} $\mu=\mu^{(m)}$}
\end{algorithmic}
\end{algorithm}

\textbf{Approximation and padding for residual function.} There is a new linear system (\ref{eq:sub_sys}) involved. We may choose an approximation for $\beta$, for example: 
\begin{equation}
\beta\approx {\rm diag}\left(\sum_{j}\int_{\Omega}\phi_i\cdot \phi_jdx\right)^{-1}r^{(m)}\approx{\rm diag}\left(\left(\int_{\Omega}\phi_i dx\right)^{-1}\right)r^{(m)}.
\end{equation}
With the solved $\beta$, we obtain the $\bar{r}^{(m)}=\mathcal{H}r^{(m)}$ expressed as a continuous piece-wise linear function w.r.t. $\{\phi_i\}$, which does not contain the boundary nodes. Hence the current $\bar{r}^{(m)}$ equals to 0 on $\partial\Omega$, which is regarded as a ``zero padding''. Here we may use other better padding for $\bar{r}^{(m)}$, for example, add boundary nodes whose values equal to the values of their corresponding nearest interior nodes (we can also apply other padding strategy like reflection padding, etc.). For convenience, we still use the notation $\bar{r}^{(m)}$ and $\mathcal{H}$, i.e.
\begin{equation}\label{eq:H_appro}
\bar{r}^{(m)}=\mathcal{H}r^{(m)}:={\rm Padding}\left((\phi_1,...,\phi_n){\rm diag}\left(\left(\int_{\Omega}\phi_1 dx\right)^{-1},...,\left(\int_{\Omega}\phi_n dx\right)^{-1}\right)r^{(m)}\right).
\end{equation}
Note that the above approximation brings an acceptable error for this hybrid method. Since $\bar{r}^{(0)}=f$, $\mu^{(1)}$ is exactly the prediction by MIONet. We may also think of $\mu^{(1)}$ as an initial guess provided by MIONet for the iterative method. When feeding $\bar{r}^{(m)}$ to $\mathcal{M}$, we need to evaluate the linear element function $\bar{r}^{(m)}$ at some sampling points (usually with smaller scale compared to the number of nodes $n$) for $\mathcal{M}$.

\textbf{Efficient interpolation.} The interpolation $\mathbb{I}$ is easy to compute, since the prediction
\begin{equation}
\mathcal{M}(k,\bar{r}^{(m)})(\cdot)
\end{equation}
is expressed by a neural network (encoded as a trunk net) rather than a discrete vector, and we only need to directly feed the interpolated points into the trunk net of the ONet. This is indeed an important advantage of the ONets series architectures.

\textbf{Hybrid iteration with multigrid method.} Since there is no requirement for the basic iterative method in this algorithm, we can also utilize other advanced iterative methods such as the multigrid method here. We will show more details in the experiment part.

\textbf{Inhomogeneous boundary condition.} Here we further investigate the inhomogeneous boundary condition. Consider
\begin{equation}\label{eq:inhomo}
	\begin{cases}
		-\nabla\cdot(k\nabla u)=f &\quad \mbox{in} \; \Omega,
		\\
		Tu = g &\quad \mbox{on} \; \partial\Omega,
	\end{cases}	
\end{equation}
where $T$ is the trace operator. In such a case, we aim to learn an operator mapping from $(k,f,g)$ to the solution $u$, i.e.,
\begin{equation}
\mathcal{G}:(k,f,g)\mapsto u.
\end{equation}
Of course we can train an MIONet $\mathcal{M}(k,f,g)$ to learn this operator. However, this strategy losses the linearity w.r.t. the residual vector/function during the iteration, and we strive to preserve this linearity to guarantee the convergency of the hybrid method. Denote by $E$ the trace extension operator for (\ref{eq:inhomo}), then $T(Eg)=g$ for $g\in H^{1/2}(\partial\Omega)$. We derive that
\begin{equation}
	\begin{cases}
		-\nabla\cdot(k\nabla u_0)=f+\nabla\cdot(k\nabla Eg) &\quad \mbox{in} \; \Omega,
		\\
		Tu_0 = 0 &\quad \mbox{on} \; \partial\Omega,
	\end{cases}
\end{equation}
where $u=u_0+Eg$. Therefore the model can be modified to
\begin{equation}
\tilde{\mathcal{M}}(k,f,g):=\mathcal{M}(k, f+\nabla\cdot(k\nabla Eg)) + Eg,
\end{equation}
where $\mathcal{M}$ is a standard MIONet which is linear w.r.t. the second input as before. Although the trace extension operator $E$ is unique defined, we may choose a suitable constructed extension as needed, for example the continuous piece-wise linear function related to a grid, which takes 0 at the interior nodes and matches $g$ at the boundary nodes. Note that here we regard $f+\nabla\cdot(k\nabla Eg)$ as an element in $H^{-1}(\Omega)$.

There is in fact another way which is much easier to achieve. It is not difficult to find that $\mathcal{G}(k,f,g)$ is linear w.r.t. $(f,g)\in L^2(\Omega)\times H^{1/2}(\partial\Omega)$, i.e.,
\begin{equation}
\mathcal{G}(k,(\lambda_1(f_1,g_1)+\lambda_2(f_2,g_2)))=\lambda_1\mathcal{G}(k,(f_1,g_1))+\lambda_2\mathcal{G}(k,(f_2,g_2)),
\end{equation}
then we naturally apply the model
\begin{equation}
\tilde{\mathcal{M}}(k,f,g):=\mathcal{M}(k,(f,g)),
\end{equation}
where $(f,g)$ is a simple cartesian product of $f$ and $g$, and it will be fed into the linear branch net of $\mathcal{M}$. In such a case, the discrete algebraic system of (\ref{eq:inhomo}) is augmented with several rows and columns taking into account the boundary nodes, where the boundary block is the identity. The unknown and residual vectors contain not only the coefficients of the interior nodes but also the boundary nodes. We apply this strategy in our experiment.

\textbf{Necessary convergence condition for operator regression models.} There is a necessary condition for the convergence of the hybrid iterative method. Assume that the hybrid method converges with a model $\mathcal{M}$, then
\begin{equation}
\lim_{s\to\infty}\mu^{(sM+1)}=\lim_{s\to\infty}\left(\mu^{(sM)}+\mathbb{I}(\mathcal{M}(k,\mathcal{H}r^{(sM)}))\right),
\end{equation}
and we have
\begin{equation}
\mathbb{I}(\mathcal{M}(k,0))=0.
\end{equation}
As the interpolation operator $\mathbb{I}$ is related to the mesh $\mathcal{T}_h$, and the above equation holds for arbitrary $\mathbb{I}_{\mathcal{T}_h}$, we know
\begin{equation}\label{eq:necessary_condition}
\mathcal{M}(k,0)=0.
\end{equation}
Obviously, Eq. (\ref{eq:necessary_condition}) holds as long as $\mathcal{M}$ is linear w.r.t. the second input. 

\section{Theoretical analysis}\label{sec:theory}
Consider the one-dimensional two-point boundary value problem of system (\ref{eq:poisson}) where $k=1$, $\Omega=(0, 1)$, i.e.,
\begin{equation}\label{eq:poisson_1d}
	\begin{cases}
		-u''=f,\quad x\in(0,1),
		\\
		u(0)=u(1)=0.
	\end{cases}	
\end{equation}
We choose a uniform grid with size $h=\frac{1}{n+1}$ and apply the linear element bases functions $\{\phi_i\}_{i=1}^{n}$ whose nodal points are interior, then the above system leads to a discrete system as
\begin{equation}\label{eq:1d_sys}
A\mu=b,
\end{equation}
where
\begin{equation}
A=\frac{1}{h}\begin{pmatrix}
2 & -1 & & & \\
-1 & 2  & -1 & & \\
 & \ddots & \ddots & \ddots & \\
 &   & -1 & 2 & -1 \\
 &  & & -1 & 2 \\
\end{pmatrix},\quad \mu=\begin{pmatrix}
\mu_1 \\
\mu_2 \\
\vdots \\
\mu_{n-1} \\
\mu_{n}\\
\end{pmatrix},\quad b=\begin{pmatrix}
b_1 \\
b_2 \\
\vdots \\
b_{n-1} \\
b_{n}\\
\end{pmatrix},
\end{equation}
and $b_i=\int_{0}^{1}f\phi_idx$. We can easily write out the eigenvalues and the eigenvectors of $A$ as
\begin{equation}
\lambda_i=\frac{4}{h}\sin^2(\frac{\pi}{2}hi),\quad \xi^i=(\xi^i_1,...,\xi^i_n)^T,\quad \xi^i_j=\sqrt{2h}\sin(i\pi hj),\quad 1\leq i,j\leq n.
\end{equation}
Denote $\Xi=(\xi^1,...,\xi^n)$, then $\Xi$ is symmetric and orthogonal, i.e. $\Xi^T=\Xi$ and $\Xi^{-1}=\Xi$.

\subsection{Convergence condition}\label{sec:convergence}
Recall that we apply the hybrid iterative method (\ref{eq:him_m}-\ref{eq:him_b}) to solve (\ref{eq:1d_sys}). For convenience, here we simply set $\bar{r}^{(0)}=\mathcal{H}b=\mathcal{H}r^{(0)}$, so that we have $\bar{r}^{(m)}=\mathcal{H}r^{(m)}$ for all $m\geq 0$. We further denote that
\begin{equation}
\bar{\mathcal{M}}(r):=\mathbb{I}(\mathcal{M}(1,\mathcal{H}r)),
\end{equation}
thus $\bar{\mathcal{M}}:\R^n\to\R^n$ is a linear map. Denote that
\begin{equation}
e^{(m)}:=\mu - \mu^{(m)},
\end{equation}
then we find that
\begin{equation}
e^{(1)}=\mu-\bar{\mathcal{M}}(r^{(0)})=(I-\bar{\mathcal{M}}A)e^{(0)},
\end{equation}
and for $1\leq m\leq M-1$,
\begin{equation}
e^{(m+1)}=(I-BA)e^{(m)},
\end{equation}
thus
\begin{equation}
e^{(M)}=(I-BA)^{M-1}e^{(1)}=(I-BA)^{M-1}(I-\bar{\mathcal{M}}A)e^{(0)}.
\end{equation}
Consequently we have
\begin{equation}
e^{(sM)}=U^se^{(0)},
\end{equation}
where
\begin{equation}
U:=(I-BA)^{M-1}(I-\bar{\mathcal{M}}A).
\end{equation}
Up to now, we have derived the convergence results of the hybrid iterative method.
\begin{theorem}
The hybrid iterative method (\ref{eq:him_m}-\ref{eq:him_b}) converges if and only if
\begin{equation}
\rho\left((I-BA)^{M-1}(I-\bar{\mathcal{M}}A)\right)<1,
\end{equation}
where $\rho(\cdot)$ denotes the spectral radius.
\end{theorem}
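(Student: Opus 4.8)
The plan is to show that the error vectors satisfy $e^{(sM)} = U^s e^{(0)}$ with $U := (I-BA)^{M-1}(I-\bar{\mathcal{M}}A)$, and then invoke the standard characterization of convergence of stationary linear iterations in terms of the spectral radius. Most of the recursive unfolding is already carried out in the excerpt above, so the proof is essentially a matter of carefully assembling these pieces and then applying the spectral-radius criterion.

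First I would recall why the hybrid iteration is a well-defined linear recursion in the error. Since $\bar{\mathcal{M}}$ is a linear map on $\R^n$ (it is the composition of the linear operators $\mathbb{I}$, $\mathcal{M}(1,\cdot)$, and $\mathcal{H}$, using that $\mathcal{M}$ is linear in its second argument), and $B$, $A$ are fixed matrices, each step of (\ref{eq:him_m})--(\ref{eq:him_b}) is affine in $\mu^{(m)}$. Subtracting the fixed point $\mu$ (which satisfies $A\mu = b$, hence $r = b - A\mu = 0$ and $e^{(m)} = \mu - \mu^{(m)}$) kills the affine part: a model-correction step gives $e^{(m+1)} = (I - \bar{\mathcal{M}}A)e^{(m)}$ and a smoother step gives $e^{(m+1)} = (I-BA)e^{(m)}$. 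Composing $M-1$ smoother steps after one model step (which is exactly the pattern over one period, since step indices $\equiv 0 \bmod M$ trigger the model and the other $M-1$ trigger the smoother) yields $e^{((s+1)M)} = U e^{(sM)}$, and by induction $e^{(sM)} = U^s e^{(0)}$.

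Next I would translate convergence of the full sequence $\{\mu^{(m)}\}$ into convergence of the subsequence $\{\mu^{(sM)}\}$. The key observation is that within each period the intermediate iterates $\mu^{(sM+1)},\dots,\mu^{(sM+M-1)}$ are obtained from $\mu^{(sM)}$ by applying fixed continuous (indeed linear-affine) maps, so $\mu^{(m)} \to \mu$ for all $m$ if and only if $\mu^{(sM)} \to \mu$ as $s \to \infty$; equivalently $e^{(sM)} = U^s e^{(0)} \to 0$. Since $e^{(0)} = \mu - \mu^{(0)} = \mu$ (using $\mu^{(0)}=0$) can be essentially arbitrary as $b$ (hence $\mu$) varies, requiring $U^s e^{(0)} \to 0$ for all admissible initial errors is equivalent to $U^s \to 0$ as $s\to\infty$, which is the standard linear-algebra fact holding if and only if $\rho(U) < 1$.

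The only genuinely delicate point — and thus the main obstacle — is the ``if and only if'' in the direction that convergence forces $\rho(U)<1$: one must be sure the space of attainable initial errors $e^{(0)}$ is all of $\R^n$ (or at least contains an eigenvector for every eigenvalue of $U$ of maximal modulus), rather than a proper invariant subspace on which $U$ might happen to be contractive while $\rho(U)\ge 1$ globally. Here this is immediate because $b$ ranges over all of $\R^n$ and $A$ is invertible, so $\mu = A^{-1}b$ — and hence $e^{(0)}$ — ranges over all of $\R^n$; one then uses $\|U^s\| \to 0 \iff \rho(U) < 1$. I would also note in passing that the statement is robust to the padding/approximation choices made for $\mathcal{H}$, since all that is used is linearity of $\bar{\mathcal{M}}$, not its precise form. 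Assembling these remarks gives the claimed equivalence $\rho\bigl((I-BA)^{M-1}(I-\bar{\mathcal{M}}A)\bigr) < 1$.
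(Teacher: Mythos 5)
Your proposal is correct and follows essentially the same route as the paper: derive the one-period error propagation $e^{(sM)}=U^{s}e^{(0)}$ with $U=(I-BA)^{M-1}(I-\bar{\mathcal{M}}A)$, then apply the standard criterion $U^{s}\to 0\iff\rho(U)<1$. Your added care about the equivalence between full-sequence and subsequence convergence and about the attainability of all initial errors (via invertibility of $A$ and arbitrariness of $b$) only makes explicit what the paper leaves implicit.
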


\begin{corollary}
There exists an $M$ large enough such that the hybrid iterative method (\ref{eq:him_m}-\ref{eq:him_b}) converges, if the applied original iterative method converges, i.e.
\begin{equation}
\rho(I-BA)<1.
\end{equation}
\end{corollary}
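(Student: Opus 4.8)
The plan is to exploit the factored form $U=(I-BA)^{M-1}(I-\bar{\mathcal{M}}A)$ and control its spectral radius by a submultiplicative norm bound. First I would fix any operator norm $\norm{\cdot}$ on $\R^{n\times n}$ and recall the elementary fact that $\rho(U)\le\norm{U}$ for every such norm, so it suffices to make $\norm{U}<1$. Since $\norm{U}\le\norm{I-BA}^{M-1}\norm{I-\bar{\mathcal{M}}A}$, the argument splits into two pieces: the factor coming from the model correction, $\norm{I-\bar{\mathcal{M}}A}$, is a fixed constant once the model $\mathcal{M}$, the mesh, and hence $\bar{\mathcal{M}}$ and $A$ are fixed — call it $C_0$, which is finite but need not be less than $1$; and the smoother factor $\norm{I-BA}^{M-1}$, which we want to drive to zero.

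The key step is to choose the norm wisely. The hypothesis is only $\rho(I-BA)<1$, which does \emph{not} imply $\norm{I-BA}<1$ in an arbitrary norm. However, a standard result (a consequence of the Jordan form, or of Householder's theorem) guarantees that for any $\epsilon>0$ there exists an operator norm $\norm{\cdot}_\epsilon$ — induced by a suitable inner product or weighting — with $\norm{I-BA}_\epsilon\le\rho(I-BA)+\epsilon=:q<1$. I would fix such a norm with $q<1$. With respect to this same norm the model factor is still some finite constant $C_0=\norm{I-\bar{\mathcal{M}}A}_\epsilon<\infty$ (all norms on the finite-dimensional space $\R^{n\times n}$ are equivalent, so finiteness is automatic). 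Then $\norm{U}_\epsilon\le q^{M-1}C_0$, and since $q<1$ we have $q^{M-1}C_0\to 0$ as $M\to\infty$; hence there exists $M$ large enough that $q^{M-1}C_0<1$, giving $\rho(U)\le\norm{U}_\epsilon<1$, which by the preceding theorem is exactly the convergence of the hybrid method.

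The main obstacle — really the only subtlety — is precisely the passage from the spectral-radius hypothesis $\rho(I-BA)<1$ to a genuine norm contraction. One must resist the temptation to write $\rho(U)\le\rho(I-BA)^{M-1}\rho(I-\bar{\mathcal{M}}A)$, which is false in general because the spectral radius is not submultiplicative for noncommuting matrices. The clean fix is the $\epsilon$-norm trick above; alternatively one could invoke Gelfand's formula, $\lim_{k\to\infty}\norm{(I-BA)^k}^{1/k}=\rho(I-BA)<1$, to conclude $\norm{(I-BA)^{M-1}}\to 0$ in any fixed norm and then bound $\norm{U}\le\norm{(I-BA)^{M-1}}\cdot\norm{I-\bar{\mathcal{M}}A}\to 0$. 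Either route is short; I would present the Gelfand-formula version since it avoids constructing the weighted norm explicitly. Everything else is routine, and the statement of the corollary follows immediately.
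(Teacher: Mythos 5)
Your proposal is correct and follows essentially the same route as the paper, which simply notes that $(I-BA)^{M-1}(I-\bar{\mathcal{M}}A)\to 0$ as $M\to\infty$ when $\rho(I-BA)<1$ and hence the spectral radius of the iteration matrix eventually drops below $1$. Your $\epsilon$-norm (or Gelfand-formula) argument is just the standard justification of that convergence-to-zero step, which the paper leaves as obvious, and you rightly flag that one cannot use submultiplicativity of the spectral radius directly.
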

\noindent The corollary is also obvious since $(I-BA)^{M-1}(I-\bar{\mathcal{M}}A)\to 0$ as $M\to\infty$ if $\rho(I-BA)<1$.


\begin{corollary}
The hybrid iterative method (\ref{eq:him_m}-\ref{eq:him_b}) with Richardson iteration, i.e. $B=\omega I$, converges if
\begin{equation}\label{eq:Ri_condition}
0<\omega<\frac{2}{\rho(A)},\quad M\geq2+\left\lfloor-\frac{\ln\|I-\bar{\mathcal{M}}A\|_2}{\ln\rho(I-\omega A)}\right\rfloor.
\end{equation}
\end{corollary}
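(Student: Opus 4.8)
The plan is to bound the spectral radius of the iteration matrix
$$U = (I-\omega A)^{M-1}(I-\bar{\mathcal{M}}A)$$
from above by a product of two explicit quantities, and then check that the two stated conditions are exactly what is needed to push that bound below $1$, since $\rho(U)<1$ is the convergence criterion established in the theorem above.

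First I would record the structure of $A$ that is already visible from its eigenvalues $\lambda_i=\frac{4}{h}\sin^2(\frac{\pi}{2}hi)>0$: the matrix $A$ is symmetric positive definite, hence $I-\omega A$ is symmetric with eigenvalues $1-\omega\lambda_i$. The hypothesis $0<\omega<2/\rho(A)$ forces $0<\omega\lambda_i\le\omega\rho(A)<2$, so every $1-\omega\lambda_i$ lies in the open interval $(-1,1)$, giving $\rho(I-\omega A)=\max_i|1-\omega\lambda_i|\in(0,1)$ — the strict positivity holding because the $\lambda_i$ are distinct, so not all of them can equal $1/\omega$. In particular $\ln\rho(I-\omega A)<0$, which is what makes the bound on $M$ meaningful.

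Next, using $\rho(U)\le\|U\|_2$, submultiplicativity of the spectral norm, and the identity $\|S^{M-1}\|_2=\rho(S)^{M-1}$ valid for any symmetric $S$ (applied to $S=I-\omega A$), I obtain
$$\rho(U)\;\le\;\|(I-\omega A)^{M-1}\|_2\,\|I-\bar{\mathcal{M}}A\|_2\;=\;\rho(I-\omega A)^{M-1}\,\|I-\bar{\mathcal{M}}A\|_2.$$
It then remains to check that the stated lower bound on $M$ makes the right-hand side strictly less than $1$. Taking logarithms, this is equivalent to $(M-1)\ln\rho(I-\omega A)<-\ln\|I-\bar{\mathcal{M}}A\|_2$, and since $\ln\rho(I-\omega A)<0$ this rearranges with a sign flip to $M-1>-\ln\|I-\bar{\mathcal{M}}A\|_2/\ln\rho(I-\omega A)$. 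Writing $x$ for that quotient, the integrality of $M$ means that $M\ge 2+\lfloor x\rfloor$ gives $M-1\ge 1+\lfloor x\rfloor>x$, hence the desired strict inequality, hence $\rho(U)<1$.

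There is no real obstacle here; the one place to be careful is the direction of the inequality when dividing by the negative number $\ln\rho(I-\omega A)$, together with the degenerate case $\|I-\bar{\mathcal{M}}A\|_2=0$ (the model inverts the residual system exactly), in which $U=0$ outright and convergence is trivial for every $M$, so it should be excluded or handled as a separate remark. Everything else reduces to submultiplicativity of the spectral norm and the elementary spectral theory of symmetric matrices already implicit in the computation of the eigenvalues of $A$.
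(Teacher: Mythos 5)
Your proposal is correct and follows essentially the same route as the paper: bound $\rho(U)$ by the spectral norm, use submultiplicativity together with the symmetry of $I-\omega A$ (so that $\|I-\omega A\|_2=\rho(I-\omega A)<1$ under $0<\omega<2/\rho(A)$), and then verify that $M-1\ge 1+\lfloor x\rfloor>x$ with $x=-\ln\|I-\bar{\mathcal{M}}A\|_2/\ln\rho(I-\omega A)$ forces $\rho(I-\omega A)^{M-1}\|I-\bar{\mathcal{M}}A\|_2<1$. Your extra remarks on the sign of the logarithm and the degenerate case $\|I-\bar{\mathcal{M}}A\|_2=0$ are harmless refinements of the same argument.
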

\begin{proof}
If $0<\omega<\frac{2}{\rho(A)}$, we have
\begin{equation*}
\rho(I-BA)=\rho(I-\omega A)<1,
\end{equation*}
thus the Richardson iteration converges. Furthermore, if Eq. (\ref{eq:Ri_condition}) holds, then
\begin{equation*}
\begin{split}
\rho\left((I-BA)^{M-1}(I-\bar{\mathcal{M}}A)\right)&\leq\norm{(I-\omega A)^{M-1}(I-\bar{\mathcal{M}}A)}_2 \\
&\leq\norm{I-\omega A}_2^{M-1}\cdot\norm{I-\bar{\mathcal{M}}A}_2 \\
&=\rho(I-\omega A)^{M-1}\cdot\norm{I-\bar{\mathcal{M}}A}_2 \\
&<\rho(I-\omega A)^{1-\frac{\ln\|I-\bar{\mathcal{M}}A\|_2}{\ln\rho(I-\omega A)}-1}\cdot\norm{I-\bar{\mathcal{M}}A}_2 \\
&=1.
\end{split}
\end{equation*}
\end{proof}
It is noteworthy that there is no requirement on the loss of the MIONet $\mathcal{M}$ to guarantee the convergence. However, an $\mathcal{M}$ with a large loss cannot accelerate the iteration, which will be discussed later.

\subsection{Error of model inference}\label{sec:mionet_error}
Since the MIONet usually learns the low-frequency data, we assume that the dataset includes the previous several eigenfunctions as 
\begin{equation}\label{eq:dataset}
\mathcal{T}=\{(1,\bar{\lambda}_i\bar{\xi}^i),\bar{\xi}^i\}_{i=1}^{n_0},\quad 1<n_0\leq n,
\end{equation}
where $\bar{\lambda}_i,\bar{\xi}^i$ are the eigenvalue and the eigenfunction of the original system, i.e.,
\begin{equation}
\bar{\lambda}_i=i^2\pi^2,\quad \bar{\xi}^i(x)=\sin(i\pi x),\quad \quad 1\leq i\leq n.
\end{equation}
For convenience, we consider the loss
\begin{equation}\label{eq:loss}
L(\theta)=\frac{1}{n_0}\sum_{i=1}^{n_0}\ell_i(\theta):=\frac{1}{n_0}\sum_{i=1}^{n_0}\|\mathcal{M}(1,\bar{\lambda}_i\bar{\xi}^i;\theta)-\bar{\xi}^i\|_{C[0,1]}^2.
\end{equation}
We further impose a generalization assumption on the MIONet for this case.
\begin{assumption}[generalization assumption]\label{ass:generalization}
Assume that the MIONet keeps a consistent Lipschitz constant $L>0$ w.r.t. the second input during training, i.e.
\begin{equation}
\norm{\mathcal{M}(k,f_1;\theta)-\mathcal{M}(k,f_2;\theta)}_{C[0,1]}\leq L\norm{f_1-f_2}_{C[0,1]}
\end{equation}
holds for $f_1,f_2\in C[0,1]$ and $\theta\in\{\theta_1,\theta_2,...\}$. The set $\{\theta_t\}$ is regarded as the sequence of parameters during the training process. $L$ depends only on $k$.
\end{assumption}

Let $\mathcal{H}$ be defined as in Eq. (\ref{eq:H_appro}) and extended with replication padding. Then
\begin{equation}
\mathcal{H}(\sqrt{\frac{h}{2}}\xi^i)=(\phi_0, \phi_1,...,\phi_n,\phi_{n+1})\cdot \sqrt{\frac{1}{2h}}(\xi_1^i,\xi_1^i,\xi_2^i,...,\xi_{n-1}^i,\xi_n^i,\xi_n^i)^T,
\end{equation}
where $\phi_0$ and $\phi_{n+1}$ are the two boundary nodal functions, hence
\begin{equation}
\begin{split}
&\norm{\mathcal{H}(\sqrt{\frac{h}{2}}\xi^i)-\bar{\xi}^i}_{C[0,1]} \\
=&\max\big(\norm{\sin(i\pi h)-\bar{\xi}^i}_{C[0,h]},\norm{\sin(i\pi h)\phi_1+\sin(2i\pi h)\phi_2-\bar{\xi}^i}_{C[h,2h]},...,\\
&\norm{\sin((n-1)i\pi h)\phi_{n-1}+\sin(ni\pi h)\phi_n-\bar{\xi}^i}_{C[(n-1)h,nh]},\norm{\sin(ni\pi h)-\bar{\xi}^i}_{C[nh,1]}\big) \\
\leq&i\pi h.
\end{split}
\end{equation}
Consequently,
\begin{equation}
\begin{split}
\norm{\bar{\mathcal{M}}(\sqrt{\frac{h}{2}}\xi^i)-\mathbb{I}(\mathcal{M}(1,\bar{\xi}^i))}_{\infty}=&\norm{\mathbb{I}(\mathcal{M}(1,\mathcal{H}(\sqrt{\frac{h}{2}}\xi^i)-\bar{\xi}^i))}_{\infty} \\
\leq&\norm{\mathcal{M}(1,\mathcal{H}(\sqrt{\frac{h}{2}}\xi^i)-\bar{\xi}^i)}_{C[0,1]} \\
\leq&L_1\norm{\mathcal{H}(\sqrt{\frac{h}{2}}\xi^i)-\bar{\xi}^i}_{C[0,1]} \\
\leq&L_1i\pi h,
\end{split}
\end{equation}
where $L_1$ is the Lipschitz constant defined as in Assumption \ref{ass:generalization}.
By Eq. (\ref{eq:loss}),
\begin{equation}\label{eq:pre_estimate}
\begin{split}
\norm{\bar{\mathcal{M}}(\bar{\lambda}_i\sqrt{\frac{h}{2}}\xi^i)-\sqrt{\frac{1}{2h}}\xi^i}_{\infty}=&\norm{\bar{\mathcal{M}}(\bar{\lambda}_i\sqrt{\frac{h}{2}}\xi^i)-\mathbb{I}(\mathcal{M}(1,\bar{\lambda}_i\bar{\xi}^i))+\mathbb{I}(\mathcal{M}(1,\bar{\lambda}_i\bar{\xi}^i))-\mathbb{I}(\bar{\xi}^i)}_{\infty} \\
\leq&\bar{\lambda}_i\norm{\bar{\mathcal{M}}(\sqrt{\frac{h}{2}}\xi^i)-\mathbb{I}(\mathcal{M}(1,\bar{\xi}^i))}_{\infty} +\norm{\mathbb{I}(\mathcal{M}(1,\bar{\lambda}_i\bar{\xi}^i))-\mathbb{I}(\bar{\xi}^i)}_{\infty} \\
\leq&\bar{\lambda}_iL_1i\pi h+\ell_i \\
=&L_1\pi^3i^3h+\ell_i.
\end{split}
\end{equation}
Now denote
\begin{equation}
e_i:=\xi^i-\bar{\mathcal{M}}(\lambda_i\xi^i),\quad1\leq i\leq n,
\end{equation}
and we get the estimate
\begin{equation}\label{eq:estimate_detail_1}
\begin{split}
\norm{e_i}_2=&\norm{\xi^i-\bar{\mathcal{M}}(\lambda_i\xi^i)}_2 \\
=&\frac{\lambda_i}{\bar{\lambda}_i}\sqrt{\frac{2}{h}}\norm{\bar{\mathcal{M}}(\bar{\lambda}_i\sqrt{\frac{h}{2}}\xi^i)-\sqrt{\frac{1}{2h}}\xi^i+\sqrt{\frac{1}{2h}}\xi^i-\frac{\bar{\lambda}_i}{\lambda_i}\sqrt{\frac{h}{2}}\xi^i}_2  \\
\leq&\frac{\lambda_i}{\bar{\lambda}_i}\sqrt{\frac{2}{h}}\left(\norm{\bar{\mathcal{M}}(\bar{\lambda}_i\sqrt{\frac{h}{2}}\xi^i)-\sqrt{\frac{1}{2h}}\xi^i}_2 + \norm{\sqrt{\frac{1}{2h}}\xi^i-\frac{\bar{\lambda}_i}{\lambda_i}\sqrt{\frac{h}{2}}\xi^i}_2\right) \\
\leq&\frac{\lambda_i}{\bar{\lambda}_i}\sqrt{\frac{2}{h}}\left(\sqrt{n}\norm{\bar{\mathcal{M}}(\bar{\lambda}_i\sqrt{\frac{h}{2}}\xi^i)-\sqrt{\frac{1}{2h}}\xi^i}_{\infty} + \norm{\sqrt{\frac{1}{2h}}\xi^i-\frac{\bar{\lambda}_i}{\lambda_i}\sqrt{\frac{h}{2}}\xi^i}_2\right). \\
\end{split}
\end{equation}
With the results of (\ref{eq:pre_estimate}-\ref{eq:estimate_detail_1}) and the orthogonality of $\Xi$, we further derive that
\begin{equation}
\begin{split}
\norm{e_i}_2\leq&\frac{\lambda_i}{\bar{\lambda}_i}\sqrt{\frac{2}{h}}\left(\sqrt{n}(L_1\pi^3i^3 h+\ell_i) + \norm{\sqrt{\frac{1}{2h}}\xi^i-\frac{\bar{\lambda}_i}{\lambda_i}\sqrt{\frac{h}{2}}\xi^i}_2\right) \\
=&\frac{4\sin^2(\frac{\pi}{2}hi)}{\pi^2i^2h^2}\sqrt{2(1-h)}(L_1\pi^3i^3h+\ell_i)+\left|\frac{4\sin^2(\frac{\pi}{2}hi)}{\pi^2i^2h^2}-1\right|.
\end{split}
\end{equation}
It is elementary to see
\begin{equation}
1-\frac{1}{3}x^2\leq\frac{\sin^2(x)}{x^2}\leq1,\quad\forall 0\neq x\in \mathbb{R}, 
\end{equation}
so that
\begin{equation}\label{eq:estimate_detail}
\begin{split}
\norm{e_i}_2\leq&\frac{4\sin^2(\frac{\pi}{2}hi)}{\pi^2i^2h^2}\sqrt{2(1-h)}(L_1\pi^3i^3h+\ell_i)+\left|\frac{4\sin^2(\frac{\pi}{2}hi)}{\pi^2i^2h^2}-1\right| \\
\leq&\sqrt{2}L_1\pi^3i^3 h+\sqrt{2}\ell_i+1-(1-\frac{1}{3}(\frac{\pi}{2}hi)^2)\\
=&\sqrt{2}L_1\pi^3i^3 h + \frac{\pi^2}{12}i^2h^2 +\sqrt{2}\ell_i, \quad 1\leq i\leq n_0.
\end{split}
\end{equation}
It is summarized as
\begin{equation}\label{eq:estimate_h}
\norm{e_i}_2=\mathcal{O}(i^3h) + \mathcal{O}(\ell_i),\quad 1\leq i\leq n_0.
\end{equation}
The error includes two parts, and the terms $\mathcal{O}(i^3h),\mathcal{O}(\ell_i)$ are caused by the errors of the discretization and the loss, respectively. The first part of this error is inevitable even though the loss $\ell_i$ has been trained to zero, since there exists a gap between the eigenvector (eigenvalue) of the discrete system (\ref{eq:1d_sys}) and the eigenfunction (eigenvalue) of the original system (\ref{eq:poisson_1d}), which becomes larger as $i$ increases.

The estimate (\ref{eq:estimate_h}) can be equivalently written as
\begin{equation}\label{eq:estimate_n}
\norm{e_i}_2=\mathcal{O}\left(\frac{i^3}{n}\right) + \mathcal{O}(\ell_i),\quad 1\leq i\leq n_0.
\end{equation}
Eq. (\ref{eq:estimate_n}) tells that $i\ll n$ is necessary to ensure that $\norm{e_i}_2$ is small enough. Therefore $n_0$, the size of the dataset $\mathcal{T}$ defined in (\ref{eq:dataset}), is no need to be too large compared to $n$. We reasonably assume that $n_0\ll n$. For a fixed $1\leq i\leq n_0$, we have derived that
\begin{equation}
\lim_{h\to 0 ,\ell_i\to 0}\norm{e_i}_2=0,\quad 1\leq i\leq n_0.
\end{equation}

Note that here we consider the ground truth, i.e. the analytical eigenvectors and eigenvalues, as data. If we utilize numerical solutions as data, the error will decrease, since the gap between data and the discrete system becomes smaller. As a matter of fact, if we replace the $\bar{\lambda}_i$ in the dataset (\ref{eq:dataset}) by the corresponding numerical eigenvalue $\frac{\lambda_i}{h}$, then the error term $\frac{\pi^2}{12}i^2h^2$ in the last equality of (\ref{eq:estimate_detail}) will disappear.



\subsection{Spectral analysis}
With the above discussion, we are able to analyze the spectral behavior and the convergence speed of the proposed hybrid iterative method. Now we express $e^{(0)},e^{(1)}$ (see Section \ref{sec:convergence}) by the eigenvectors of $A$ as
\begin{equation}
	\begin{cases}
		e^{(0)}=\mu-\mu^{(0)}=\mu=\sum_{i=1}^{n}\alpha_i\xi^i=\Xi\cdot\alpha,
		\\
		e^{(1)}=(I-\bar{\mathcal{M}}A)e^{(0)}=\sum_{i=1}^{n}\alpha_i(I-\lambda_i \bar{\mathcal{M}})\xi^i=:\sum_{i=1}^{n}\beta_i\xi^i=\Xi\cdot\beta,
	\end{cases}	
\end{equation}
where $\alpha:=(\alpha_1,...,\alpha_n)^T$, $\beta:=(\beta_1,...,\beta_n)^T$. We further set
\begin{equation}\label{eq:YZ}
	\begin{cases}
		Y:=\Xi^{-1}\bar{\mathcal{M}}\Xi=(\xi^1,...,\xi^n)^{-1}\bar{\mathcal{M}}(\xi^1,...,\xi^n),
		\\
		Z:=\Xi^{-1}B\Xi=(\xi^1,...,\xi^n)^{-1}B(\xi^1,...,\xi^n),
	\end{cases}	
\end{equation}
then by direct computing we have
\begin{equation}
\beta=(I-Y\Lambda)\alpha,\quad \Lambda={\rm diag}(\lambda_1,...,\lambda_n),
\end{equation}
so that
\begin{equation}
e^{(1)}=\Xi\cdot(I-Y\Lambda)\alpha.
\end{equation}
Similarly, we can derive that
\begin{equation}
e^{(M)}=\Xi\cdot(I-Z\Lambda)^{M-1}(I-Y\Lambda)\alpha.
\end{equation}
Consequently, we obtain the final error as
\begin{equation}
e^{(sM)}=\Xi\cdot T^s\alpha,
\end{equation}
where
\begin{equation}\label{eq:T}
T:=(I-Z\Lambda)^{M-1}(I-Y\Lambda).
\end{equation}

Next we proceed discussions on $T$. Note that the $Y$ defined in (\ref{eq:YZ}) depends on the trained operator regression model MIONet, i.e., $\mathcal{M}$. Assume that $\bar{\mathcal{M}}$ perfectly learns the eigenvectors of Eq. (\ref{eq:1d_sys}), so that
\begin{equation}
A\bar{\mathcal{M}}(\xi^i)=\xi^i,
\end{equation}
which leads to
\begin{equation}
\bar{\mathcal{M}}(\xi^i)=\lambda_i^{-1}\xi^i.
\end{equation}
Then
\begin{equation}
Y=\Xi^{-1}\bar{\mathcal{M}}\Xi=\Lambda^{-1},
\end{equation}
thus
\begin{equation}
Y\Lambda=I,\quad T=0.
\end{equation}
In such a situation, the first iterative step by MIONet in fact gives the right solution for the discrete system (\ref{eq:1d_sys}). However, we have to take into account the error of MIONet. Keep in mind that $Y$ is in fact an approximation of $\Lambda^{-1}$, therefore we expect
\begin{equation}
I-Y\Lambda\approx0,
\end{equation}
at least for the first few columns. As discussed in Section \ref{sec:mionet_error}, we recall that
\begin{equation}
e_i=\xi^i-\bar{\mathcal{M}}(\lambda_i\xi^i),\quad 1\leq i\leq n,
\end{equation}
and
\begin{equation}
\norm{e_i}_2=\mathcal{O}(i^3h) + \mathcal{O}(\ell_i),\quad 1\leq i\leq n_0.
\end{equation}
The errors of MIONet on the previous $n_0$ eigenvectors can be controlled by $h$ and the loss $\ell_i$. Hence we assume that $e_1,...,e_{n_0}$ are small enough with sufficiently training for MIONet, while $e_{n_0+1},...,e_n$ are uncontrolled. Therefore,
\begin{equation}
\begin{split}
I-Y\Lambda=&\Xi^{-1}\cdot(\Xi-\bar{\mathcal{M}}\Xi\Lambda) \\
=&\Xi\cdot(e_1,...,e_{n_0},e_{n_0+1},...,e_n) \\
=&\begin{bmatrix}
\Xi\cdot(e_1,...,e_{n_0}) & \Xi\cdot(e_{n_0+1},...,e_n)
\end{bmatrix},
\end{split}
\end{equation}
whose first $n_0$ columns are small enough. It indicates that a well-trained MIONet can eliminate the low-frequency components of the error.

Return to the final error $e^{(sM)}$, we have
\begin{equation}
\begin{split}
e^{(sM)}&=\Xi\cdot((I-Z\Lambda)^{M-1}(I-Y\Lambda))^s\alpha \\
&=\Xi\cdot(I-Z\Lambda)^{M-1}\left((I-Y\Lambda)(I-Z\Lambda)^{M-1}\right)^{s-1}(I-Y\Lambda)\alpha \\
&=\Xi\cdot(I-Z\Lambda)^{M-1}\tilde{T}^{s-1}(I-Y\Lambda)\alpha,
\end{split}
\end{equation}
where
\begin{equation}
\tilde{T}:=(I-Y\Lambda)(I-Z\Lambda)^{M-1}.
\end{equation}
We subsequently study how the $\tilde{T}$ acts on a vector $v=(v_1,...,v_n)^T\in\R^n$. Now assume that we choose the Richardson iteration and set $\omega=\frac{h}{4}$ which satisfies the convergence condition, then $Z=\frac{h}{4}I$ and
\begin{equation}
\begin{split}
\tilde{T}v&=(I-Y\Lambda)(I-\frac{h}{4}\Lambda)^{M-1}v \\
&=\Xi^{-1}\cdot(\Xi-\bar{\mathcal{M}}\Xi\Lambda)\left(\cos^{2(M-1)}(\frac{\pi}{2}h)v_1,..., \cos^{2(M-1)}(\frac{\pi}{2}hn)v_n\right)^T \\
&=\Xi\cdot(e_1,...,e_n)\left(\cos^{2(M-1)}(\frac{\pi}{2}h)v_1,..., \cos^{2(M-1)}(\frac{\pi}{2}hn)v_n\right)^T \\
&=\Xi\cdot\left(\sum_{i=1}^{n}\cos^{2(M-1)}(\frac{\pi}{2}hi)v_ie_i\right),
\end{split}
\end{equation}
therefore
\begin{equation}
\begin{split}
\norm{\tilde{T}v}_2&=\norm{\sum_{i=1}^{n}\cos^{2(M-1)}(\frac{\pi}{2}hi)v_ie_i}_2 \\
&\leq \sum_{i=1}^{n}\cos^{2(M-1)}(\frac{\pi}{2}hi)|v_i|\norm{e_i}_2 \\
&\leq \left(\sum_{i=1}^{n}\cos^{4(M-1)}(\frac{\pi}{2}hi)\norm{e_i}_2^2\right)^{\frac{1}{2}}\left(\sum_{i=1}^{n}|v_i|^2\right)^{\frac{1}{2}} \\
&\leq\left(\sum_{i=1}^{n}\cos^{2(M-1)}(\frac{\pi}{2}hi)\norm{e_i}_2\right)\norm{v}_2.
\end{split}
\end{equation}
As
\begin{equation}
	\begin{cases}
		\cos^2(\frac{\pi}{2}hi)\leq\cos^2(\frac{\pi}{2}h),\quad 1\leq i\leq n_0,
		\\
		\cos^2(\frac{\pi}{2}hi)\leq\cos^2(\frac{\pi}{2}h(n_0+1)),\quad n_0+1\leq i\leq n,
	\end{cases}	
\end{equation}
we obtain the estimate
\begin{equation}
\norm{\tilde{T}v}_2\leq \left(\cos^{2(M-1)}(\frac{\pi}{2}h)\sum_{i=1}^{n_0}\norm{e_i}_2+\cos^{2(M-1)}(\frac{\pi}{2}h(n_0+1))\sum_{i=n_0+1}^{n}\norm{e_i}_2\right)\norm{v}_2,
\end{equation}
which shows the change of the error within a period of $M$ steps. Subsequently, we write down the total error as
\begin{equation}
\begin{split}
&\norm{e^{(sM)}}_2 \\
=&\norm{(I-Z\Lambda)^{M-1}\tilde{T}^{s-1}(I-Y\Lambda)\alpha}_2 \\
=&\norm{(I-\frac{h}{4}\Lambda)^{M-1}\tilde{T}^{s-1}(I-\Xi^{-1}\bar{\mathcal{M}}\Xi\Lambda)\alpha}_2 \\
\leq&\cos^{2(M-1)}(\frac{\pi}{2}h)\norm{\tilde{T}^{s-1}(I-\Xi^{-1}\bar{\mathcal{M}}\Xi\Lambda)\alpha}_2 \\
\leq& \cos^{2(M-1)}(\frac{\pi}{2}h)\left(\cos^{2(M-1)}(\frac{\pi}{2}h)\sum_{i=1}^{n_0}\norm{e_i}_2+\cos^{2(M-1)}(\frac{\pi}{2}h(n_0+1))\sum_{i=n_0+1}^{n}\norm{e_i}_2\right)^{s-1} \\
&\cdot\norm{(I-\Xi^{-1}\bar{\mathcal{M}}\Xi\Lambda)\alpha}_2.
\end{split}
\end{equation}
Moreover, the last term in above inequality is
\begin{equation}
\begin{split}
&\norm{(I-\Xi^{-1}\bar{\mathcal{M}}\Xi\Lambda)\alpha}_2\\=&\norm{(\Xi-\bar{\mathcal{M}}\Xi\Lambda)\alpha}_2 \\
=&\norm{(e_1,...,e_n)\alpha}_2 \\
\leq&\sum_{i=1}^{n_0}|\alpha_i|\norm{e_i}_2+\sum_{i=n_0+1}^{n}|\alpha_i|\norm{e_i}_2 \\
\leq&C\left(\cos^{2(M-1)}(\frac{\pi}{2}h)\sum_{i=1}^{n_0}\norm{e_i}_2+\cos^{2(M-1)}(\frac{\pi}{2}h(n_0+1))\sum_{i=n_0+1}^{n}\norm{e_i}_2\right),
\end{split}
\end{equation}
where $C>0$ is a constant independent of $s$. We hence have the simple relationship that
\begin{equation}
\norm{e^{(sM)}}_2\leq C\cdot\left(\cos^{2(M-1)}(\frac{\pi}{2}h)\sum_{i=1}^{n_0}\norm{e_i}_2+\cos^{2(M-1)}(\frac{\pi}{2}h(n_0+1))\sum_{i=n_0+1}^{n}\norm{e_i}_2\right)^s.
\end{equation}
For comparison, we write down the error of the original Richardson method as
\begin{equation}
\norm{e^{(sM)}_{\rm Ri}}_2=\left(\sum_{i=1}^n\cos^{4sM}(\frac{\pi}{2}hi)\alpha_i^2\right)^{\frac{1}{2}}\geq \cos^{2sM}(\frac{\pi}{2}h)|\alpha_1|=C_{\rm Ri}\cdot\left(\cos^{2M}(\frac{\pi}{2}h)\right)^s,
\end{equation}
where $C_{\rm Ri}>0$ is a constant under the assumption that $\alpha_1\neq0$.

Now we show the errors of the Richardson iterative method and the corresponding hybrid method as
\begin{numcases}
{}
\norm{e^{(sM)}_{\rm Ri}}_2\geq C_{\rm Ri}\cdot\left(\eta_1^M\right)^s,\quad\quad\quad\quad & Richardson\label{eq:Richardson_error} \\
\norm{e^{(sM)}}_2\leq C\cdot\left(\epsilon\cdot\eta_1^{M-1}+R\cdot\eta_2^{M-1}\right)^s,\quad\quad\quad\quad & Hybrid\label{eq:Hybrid_error}
\end{numcases}
where
\begin{equation}
\eta_1=\cos^{2}(\frac{\pi}{2}h),\quad \eta_2=\cos^{2}(\frac{\pi}{2}h(n_0+1)),\quad \epsilon=\sum_{i=1}^{n_0}\norm{e_i}_2,\quad 
R=\sum_{i=n_0+1}^{n}\norm{e_i}_2.
\end{equation}
Note that $\eta_1\approx1$, $1>\eta_1>\eta_2>0$, and $\epsilon$ is usually small enough while $R$ is uncontrolled. Up to now, we are able to compare the convergence speeds of these two methods.
\begin{theorem}
Assume that $\epsilon<\eta_1$. Then there exists a $K>0$, such that for any integer $M>K$, it holds
\begin{equation}
\norm{e^{(sM)}}_2<\norm{e^{(sM)}_{\rm Ri}}_2
\end{equation}
for $s$ large enough. Briefly, the hybrid iterative method converges faster than the original Richardson iterative method for $M$ large enough as long as $\epsilon<\eta_1$.
\end{theorem}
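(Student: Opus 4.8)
The plan is to compare the two scalar bases $\eta_1^M$ and $\epsilon\eta_1^{M-1}+R\eta_2^{M-1}$ appearing in (\ref{eq:Richardson_error}) and (\ref{eq:Hybrid_error}), and to show that for $M$ large the second is strictly smaller. Once that is established, the theorem follows by taking $s\to\infty$: both error bounds are $C\cdot(\text{base})^s$ (with the Richardson bound a \emph{lower} bound $C_{\rm Ri}(\eta_1^M)^s$ and the hybrid bound an \emph{upper} bound $C(\epsilon\eta_1^{M-1}+R\eta_2^{M-1})^s$), so if the hybrid base is a constant factor $<1$ times the Richardson base, then for $s$ large enough the constants $C$, $C_{\rm Ri}$ are absorbed and the inequality $\norm{e^{(sM)}}_2<\norm{e^{(sM)}_{\rm Ri}}_2$ holds.

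So the first step is to find $K$ such that for all integers $M>K$,
\begin{equation*}
\epsilon\,\eta_1^{M-1}+R\,\eta_2^{M-1}<\eta_1^{M}.
\end{equation*}
Dividing by $\eta_1^{M-1}>0$, this is equivalent to $\epsilon + R(\eta_2/\eta_1)^{M-1}<\eta_1$. Since $0<\eta_2<\eta_1$, the ratio $\eta_2/\eta_1\in(0,1)$, so $R(\eta_2/\eta_1)^{M-1}\to 0$ as $M\to\infty$. Under the hypothesis $\epsilon<\eta_1$, we have $\eta_1-\epsilon>0$, hence there exists $K>0$ with $R(\eta_2/\eta_1)^{M-1}<\eta_1-\epsilon$ for all $M>K$; explicitly one may take $K=1+\log(R/(\eta_1-\epsilon))/\log(\eta_1/\eta_2)$. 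For such $M$ we get $\epsilon\eta_1^{M-1}+R\eta_2^{M-1}<\eta_1^{M}$, i.e.\ the hybrid base, call it $\rho_{\rm hyb}(M)$, is strictly less than the Richardson base $\rho_{\rm Ri}(M)=\eta_1^{M}$.

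The second step handles the constants. Fix any $M>K$ and set $\theta:=\rho_{\rm hyb}(M)/\rho_{\rm Ri}(M)\in(0,1)$. From the two displayed bounds,
\begin{equation*}
\frac{\norm{e^{(sM)}}_2}{\norm{e^{(sM)}_{\rm Ri}}_2}\le\frac{C\,\rho_{\rm hyb}(M)^s}{C_{\rm Ri}\,\rho_{\rm Ri}(M)^s}=\frac{C}{C_{\rm Ri}}\,\theta^{\,s}\xrightarrow[s\to\infty]{}0,
\end{equation*}
so in particular it is $<1$ for $s$ large enough, which is exactly the claim. (Here one uses $C_{\rm Ri}>0$, valid under the standing assumption $\alpha_1\ne 0$ made just before the theorem, and $C>0$ from the spectral estimate.) The main subtlety — and the only place one must be slightly careful — is the direction of the two inequalities: the Richardson estimate is used as a lower bound and the hybrid estimate as an upper bound, so the comparison of the geometric rates genuinely forces the crossover, whereas the $\mathcal{O}(1)$ constants $C,C_{\rm Ri}$ only affect \emph{how large} $s$ must be, not whether the crossover eventually happens. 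I would also remark that $\epsilon<\eta_1$ is a mild condition: since $\eta_1=\cos^2(\tfrac{\pi}{2}h)\approx 1$ for fine meshes, it merely asks that the accumulated low-frequency model error $\epsilon=\sum_{i=1}^{n_0}\norm{e_i}_2$ be below (roughly) $1$, which by (\ref{eq:estimate_h}) holds once the loss is small and $h$ is small with $n_0$ fixed.
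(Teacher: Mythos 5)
Your proposal is correct and follows essentially the same route as the paper: the paper's entire proof is the single limit computation $\lim_{M\to\infty}(\epsilon\,\eta_1^{M-1}+R\,\eta_2^{M-1})/\eta_1^{M}=\epsilon/\eta_1<1$, which is exactly your comparison of the two geometric bases. You merely spell out the details the paper leaves implicit (an explicit choice of $K$ and the $s\to\infty$ step absorbing the constants $C$ and $C_{\rm Ri}$), which is fine and arguably tidier.
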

\begin{proof}
If $\epsilon<\eta_1$, we have
\begin{equation}
\lim_{M\to\infty}\frac{\epsilon\cdot\eta_1^{M-1}+R\cdot\eta_2^{M-1}}{\eta_1^M}=\lim_{M\to\infty}\frac{\epsilon}{\eta_1}+\frac{R}{\eta_2}\cdot\left(\frac{\eta_2}{\eta_1}\right)^{M}=\frac{\epsilon}{\eta_1}<1.
\end{equation}
\end{proof}

Next we investigate how the $M$ influences the convergence rate of the hybrid iterative method. According to Eq. (\ref{eq:Hybrid_error}-\ref{eq:Richardson_error}), the average convergence rates of the above methods are derived as the following.
\begin{theorem}[convergence rate]
The convergence rate of the Richardson iterative method and an upper bound of the average convergence rate of the corresponding hybrid iterative method are
\begin{numcases}
{}
\lim_{s\to\infty}\norm{e^{(sM)}_{\rm Ri}}_2^{\frac{1}{sM}}= \eta_1,\quad\quad\quad\quad & {\rm Richardson} \\
\varlimsup_{s\to\infty}\norm{e^{(sM)}}_2^{\frac{1}{sM}}\leq\eta_1\cdot\left(\frac{\epsilon}{\eta_1}+\frac{R}{\eta_2}\cdot\left(\frac{\eta_2}{\eta_1}\right)^M\right)^{\frac{1}{M}}.\quad\quad\quad\quad & {\rm Hybrid}
\end{numcases}
\end{theorem}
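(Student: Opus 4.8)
The plan is to deduce both statements directly from the error estimates (\ref{eq:Richardson_error})--(\ref{eq:Hybrid_error}) already established, by taking $sM$-th roots and sending $s\to\infty$; the only genuine content is a dominant-mode argument for the Richardson rate and an elementary rearrangement for the hybrid bound.

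First I would treat the Richardson method. Starting from the exact identity
\[
\norm{e^{(sM)}_{\rm Ri}}_2=\left(\sum_{i=1}^n\cos^{4sM}(\tfrac{\pi}{2}hi)\,\alpha_i^2\right)^{\frac12}=\left(\sum_{i=1}^n\eta_i^{2sM}\,\alpha_i^2\right)^{\frac12},\qquad \eta_i:=\cos^2(\tfrac{\pi}{2}hi),
\]
I would note that since $h=\tfrac{1}{n+1}$ the angles $\tfrac{\pi}{2}hi$ all lie in $(0,\tfrac{\pi}{2})$, where $\cos^2$ is strictly decreasing, so $\eta_1>\eta_2>\cdots>\eta_n>0$. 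Under the standing assumption $\alpha_1\neq0$ this gives the two-sided bound
\[
|\alpha_1|^{\frac{1}{sM}}\,\eta_1 \ \le\ \norm{e^{(sM)}_{\rm Ri}}_2^{\frac{1}{sM}}\ \le\ \left(n\max_i\alpha_i^2\right)^{\frac{1}{2sM}}\eta_1,
\]
and letting $s\to\infty$ both outer factors tend to $1$, so $\lim_{s\to\infty}\norm{e^{(sM)}_{\rm Ri}}_2^{1/(sM)}=\eta_1$; this is just the convergence of an $\ell^{2sM}$-type norm to the $\ell^\infty$ norm.

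Next I would handle the hybrid method. Raising the bound (\ref{eq:Hybrid_error}), $\norm{e^{(sM)}}_2\le C(\epsilon\,\eta_1^{M-1}+R\,\eta_2^{M-1})^s$, to the power $\tfrac{1}{sM}$ gives $\norm{e^{(sM)}}_2^{1/(sM)}\le C^{1/(sM)}(\epsilon\,\eta_1^{M-1}+R\,\eta_2^{M-1})^{1/M}$. Since $C$ does not depend on $s$, we have $C^{1/(sM)}\to1$, hence
\[
\varlimsup_{s\to\infty}\norm{e^{(sM)}}_2^{\frac{1}{sM}}\le \left(\epsilon\,\eta_1^{M-1}+R\,\eta_2^{M-1}\right)^{\frac1M}.
\]
It then remains to factor $\eta_1^M$ out of the bracket via $\epsilon\,\eta_1^{M-1}+R\,\eta_2^{M-1}=\eta_1^{M}\left(\tfrac{\epsilon}{\eta_1}+\tfrac{R}{\eta_2}\left(\tfrac{\eta_2}{\eta_1}\right)^{M}\right)$ and take the $M$-th root, which yields exactly $\eta_1\left(\tfrac{\epsilon}{\eta_1}+\tfrac{R}{\eta_2}(\tfrac{\eta_2}{\eta_1})^M\right)^{1/M}$.

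There is no substantial obstacle here: the theorem is essentially a corollary of the estimates proved beforehand. The two points requiring care are (i) keeping the dependence on the initial data explicit, in particular invoking $\alpha_1\neq0$, so that the Richardson rate is genuinely $\eta_1$ rather than some $\eta_i$ with $i>1$ (which would happen only if the slowest-decaying low-frequency component were absent from the initial error); and (ii) confirming that the constant $C$ in (\ref{eq:Hybrid_error}) is independent of $s$ for each fixed $M$ — which it is, as observed when that inequality was derived — so that $C^{1/(sM)}\to1$ and the $\limsup$ estimate is legitimate.
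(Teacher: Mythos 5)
Your argument is correct and is essentially the proof the paper leaves implicit: both statements follow by taking $\frac{1}{sM}$-th roots in the already-established bounds (\ref{eq:Richardson_error})--(\ref{eq:Hybrid_error}), with the same factorization $\epsilon\,\eta_1^{M-1}+R\,\eta_2^{M-1}=\eta_1^{M}\bigl(\tfrac{\epsilon}{\eta_1}+\tfrac{R}{\eta_2}(\tfrac{\eta_2}{\eta_1})^{M}\bigr)$ and the same use of $\alpha_1\neq 0$. Your only addition is the matching upper bound $\norm{e^{(sM)}_{\rm Ri}}_2\leq \eta_1^{sM}\bigl(n\max_i\alpha_i^2\bigr)^{1/2}$ needed to turn the paper's lower bound into the stated equality for Richardson, which is a worthwhile explicit step but not a different route.
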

\noindent This theorem clearly points out how the network correction step accelerates the iteration.

Denote
\begin{equation}\label{eq:convergence_rate}
{\rm Rate}(M)=\eta_1\cdot\left(\frac{\epsilon}{\eta_1}+\frac{R}{\eta_2}\cdot\left(\frac{\eta_2}{\eta_1}\right)^M\right)^{\frac{1}{M}}.
\end{equation}
To visually observe the tendency of Rate($M$), we for example set $\eta_1=0.999, \eta_2=0.5, \epsilon=0.1, R=10$, and plot them in Figure \ref{fig:rate_M}. It is shown that ${\rm Rate}(M)$ decreases first then increases, and there exists a minimum point of ${\rm Rate}(M)$. For $M$ that is too small, the rate will be larger than 1, which means the hybrid iterative method will diverge. The rate increases and tends to the convergence rate of Richardson as $M\to\infty$. 
\begin{figure}[htbp]
    \centering
    \includegraphics[width=0.6\textwidth]{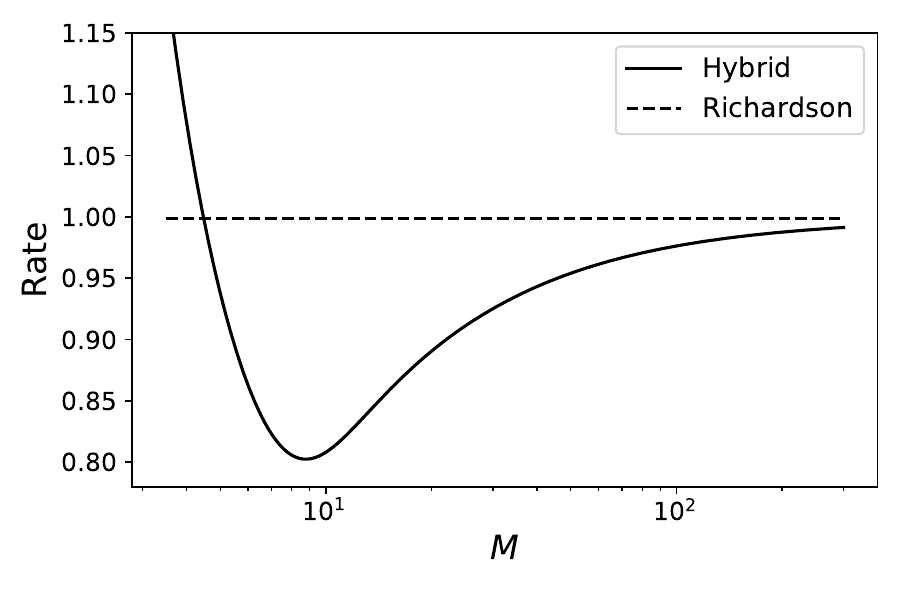}
    \caption{\textbf{An example of ${\rm Rate}(M)$.} In this case $\eta_1=0.999, \eta_2=0.5, \epsilon=0.1, R=10$.}
    \label{fig:rate_M}
\end{figure}

\subsection{Discussion for complicated cases}
At this point, we have systematically analyzed the hybrid iterative method for the one-dimensional case with Richardson iteration (or equivalently damped Jacobi iteration). For more complicated case, such as the high-dimensional problem with Gauss-Seidel iteration which is a better smoother, we have to apply other effective tools \cite{brandt1977multi,hackbusch2013multi}. The smoothing property of GS is conveniently analyzed by using local mode analysis introduced by Brandt \cite{brandt1977multi}. Here let us show the idea and promote the discussion on the hybrid Gauss-Seidel iteration.

Recall that the 2-d Poisson equation with the uniform grid and the lexicographical order can be discretized as
\begin{equation}\label{eq:ideal_problem}
4\mu_{i,j}-(\mu_{i-1,j}+\mu_{i,j-1}+\mu_{i,j+1}+\mu_{i+1,j})=b_{i,j}.
\end{equation}
To begin with, we add some idealized assumptions and simplifications as in the local mode analysis:
\begin{itemize}
\item[1.] Assume that boundary conditions are neglected and the problem is defined on an infinite grid with a period of $n+1$, i.e.
\begin{equation}
\mu_{i+k\cdot(n+1),j+l\cdot(n+1)}=\mu_{i,j},\quad b_{i+k\cdot(n+1),j+l\cdot(n+1)}=b_{i,j},\quad \forall i,j,k,l\in\mathbb{Z}.
\end{equation}
\item[2.] Assume that the Gauss-Seidel iteration is implicitly defined as
\begin{equation}\label{eq:GS_implicit}
4\tilde{\mu}_{i,j}-(\tilde{\mu}_{i-1,j}+\tilde{\mu}_{i,j-1}+\bar{\mu}_{i,j+1}+\bar{\mu}_{i+1,j})=b_{i,j},\quad \forall i,j\in\mathbb{Z},
\end{equation}
where $\tilde{\mu}$ is updated from $\bar{\mu}$. Note that Eq. (\ref{eq:GS_implicit}) leads to an algebraic system for $\tilde{\mu}$, which is uniquely solvable since it is strictly diagonally dominant.
\end{itemize}
Denote $\tilde{e}_{i,j}=\mu_{i,j}-\tilde{\mu}_{i,j}$ and $\bar{e}_{i,j}=\mu_{i,j}-\bar{\mu}_{i,j}$, then
\begin{equation}\label{eq:GS_update}
4\tilde{e}_{i,j}-(\tilde{e}_{i-1,j}+\tilde{e}_{i,j-1}+\bar{e}_{i,j+1}+\bar{e}_{i+1,j})=0.
\end{equation}
We expand these errors by the discrete Fourier transformation, as
\begin{equation}\label{eq:discrete_fourier}
\tilde{e}_{i,j}=\sum_{\theta\in\Theta}\tilde{c}(\theta)\Psi^{\theta}_{i,j},\quad \bar{e}_{i,j}=\sum_{\theta\in\Theta}\bar{c}(\theta)\Psi^{\theta}_{i,j},\quad 0\leq i,j\leq n,
\end{equation}
where
\begin{equation}
\Psi^{\theta}_{i,j}=\frac{1}{n+1}\mathbf{e}^{\mathbf{i}(i\theta_1+j\theta_2)},\quad \mathbf{i}^2=-1,\quad \theta=(\theta_1, \theta_2),
\end{equation}
and
\begin{equation}
\Theta=\left\{\frac{2\pi}{n+1}(k,l)\bigg|-\frac{n}{2}\leq k,l\leq\frac{n+1}{2},\quad k,l\in \mathbb{Z}\right\}.
\end{equation}
Substituting (\ref{eq:discrete_fourier}) into (\ref{eq:GS_update}), we obtain
\begin{equation}
\zeta(\theta):=\left|\frac{\tilde{c}(\theta)}{\bar{c}(\theta)}\right|=\left|\frac{\mathbf{e}^{\mathbf{i}\theta_1}+\mathbf{e}^{\mathbf{i}\theta_2}}{4-\mathbf{e}^{\mathbf{i}\theta_1}-\mathbf{e}^{\mathbf{i}\theta_2}}\right|.
\end{equation}
The smoothing factor is defined as
\begin{equation}
\zeta_{\rho}=\max_{\rho\pi\leq|\theta|\leq\pi}\zeta(\theta),\quad |\theta|:=\max(|\theta_1|,|\theta_2|),\quad 0<\rho<1.
\end{equation}
It can be shown that $\zeta_{1/2}=\zeta(\frac{\pi}{2},\arccos(\frac{4}{5}))=\frac{1}{2}$, which means the GS iteration can efficiently eliminate the high-frequency errors. 

It is worth noting that $\zeta(0,0)=1$, which indicates that the lowest-frequency component will never be eliminated. In fact, the problem (\ref{eq:ideal_problem}) does not determine a solution, since a change of a constant on $\mu$ does not affect the relationship. For convenience, we only consider the error excluding the lowest-frequency component in the following.

Based on this framework, we proceed the discussion on the network correction step. Assume that the MIONet-based iterative step $\bar{\mathcal{M}}$ sufficiently learns the low-frequency modes, i.e.,
\begin{equation}\label{eq:mionet_learn}
4[\bar{\mathcal{M}}\Psi^{\theta}]_{i,j}-([\bar{\mathcal{M}}\Psi^{\theta}]_{i-1,j}+[\bar{\mathcal{M}}\Psi^{\theta}]_{i,j-1}+[\bar{\mathcal{M}}\Psi^{\theta}]_{i,j+1}+[\bar{\mathcal{M}}\Psi^{\theta}]_{i+1,j})\approx\Psi^\theta_{i,j},\quad 0<|\theta|\leq\rho\pi,
\end{equation}
where $0<\rho<1$ is a constant that determines the (low-)frequency components learned. Denote
\begin{equation}\label{eq:mode_express}
\bar{\mathcal{M}}\Psi^{\theta}=\sum_{\varphi\in\Theta}\alpha_{\theta}(\varphi)\cdot \Psi^\varphi,
\end{equation}
we substitute (\ref{eq:mode_express}) into (\ref{eq:mionet_learn}) and similarly derive that
\begin{equation}
\sum_{\varphi\in\Theta}(4-2\cos(\varphi_1)-2\cos(\varphi_2))\alpha_{\theta}(\varphi)\Psi^{\varphi}\approx\Psi^{\theta},\quad 0<|\theta|\leq\rho\pi,
\end{equation}
which leads to
\begin{equation}\label{eq:dirac}
(4-2\cos(\varphi_1)-2\cos(\varphi_2))\alpha_{\theta}(\varphi)\approx\delta_{\theta}^{\varphi}\quad ({\rm Dirac\ delta\ function}),\quad 0<|\theta|\leq\rho\pi.
\end{equation}
Denote
\begin{equation}
\gamma_\theta^\varphi:=\delta_{\theta}^{\varphi}-(4-2\cos(\varphi_1)-2\cos(\varphi_2))\alpha_{\theta}(\varphi),
\end{equation}
then we suppose that $|\gamma_\theta^\varphi|$ is as small as needed for $0<|\theta|\leq\rho\pi$. 

Now we consider the MIONet iteration step, i.e.,
\begin{equation}
\tilde{\mu}=\bar{\mu}+\bar{\mathcal{M}}(\bar{r}),
\end{equation}
or equivalently
\begin{equation}
\tilde{e}=\bar{e}-\bar{\mathcal{M}}(\bar{r}),
\end{equation}
where $\tilde{e}=\mu-\tilde{\mu}$, $\bar{e}=\mu-\bar{\mu}$, and
\begin{equation}
\begin{split}
\bar{r}_{i,j}=&b_{i,j}-(4\bar{\mu}_{i,j}-(\bar{\mu}_{i-1,j}+\bar{\mu}_{i,j-1}+\bar{\mu}_{i,j+1}+\bar{\mu}_{i+1,j})) \\
=&4\mu_{i,j}-(\mu_{i-1,j}+\mu_{i,j-1}+\mu_{i,j+1}+\mu_{i+1,j})\\
&-(4\bar{\mu}_{i,j}-(\bar{\mu}_{i-1,j}+\bar{\mu}_{i,j-1}+\bar{\mu}_{i,j+1}+\bar{\mu}_{i+1,j})) \\
=&4\bar{e}_{i,j}-(\bar{e}_{i-1,j}+\bar{e}_{i,j-1}+\bar{e}_{i,j+1}+\bar{e}_{i+1,j}).
\end{split}
\end{equation}
We expand $\tilde{e},\bar{e},\bar{r}$ as before, i.e.,
\begin{equation}
\tilde{e}=\sum_{\theta\in\Theta}\tilde{c}(\theta)\Psi^{\theta},\quad\bar{e}=\sum_{\theta\in\Theta}\bar{c}(\theta)\Psi^{\theta},\quad \bar{r}=\sum_{\theta\in\Theta}(4-2\cos(\theta_1)-2\cos(\theta_2))\bar{c}(\theta)\Psi^{\theta},
\end{equation}
then we have
\begin{equation}
\begin{split}
\sum_{\theta\in\Theta}\tilde{c}(\theta)\Psi^{\theta}=&\sum_{\theta\in\Theta}\bar{c}(\theta)\Psi^{\theta}-\bar{\mathcal{M}}\left(\sum_{\theta\in\Theta}(4-2\cos(\theta_1)-2\cos(\theta_2))\bar{c}(\theta)\Psi^{\theta}\right) \\
=&\sum_{\theta\in\Theta}\bar{c}(\theta)\Psi^{\theta}-\sum_{\theta\in\Theta}(4-2\cos(\theta_1)-2\cos(\theta_2))\bar{c}(\theta)\sum_{\varphi\in\Theta}\alpha_{\theta}(\varphi)\cdot \Psi^\varphi \\
=&\sum_{\theta\in\Theta}\bar{c}(\theta)\Psi^{\theta}-\sum_{\theta\in\Theta}\sum_{\varphi\in\Theta}(4-2\cos(\varphi_1)-2\cos(\varphi_2))\bar{c}(\varphi)\alpha_{\varphi}(\theta)\cdot \Psi^\theta.
\end{split}
\end{equation}
Consequently,
\begin{equation}\label{eq:c_theta_update}
\begin{split}
\tilde{c}(\theta)=&\bar{c}(\theta)-\sum_{\varphi\in\Theta}\bar{c}(\varphi)(4-2\cos(\varphi_1)-2\cos(\varphi_2))\alpha_{\varphi}(\theta) \\
=&\sum_{\varphi\in\Theta}\bar{c}(\varphi)\cdot\frac{4-2\cos(\varphi_1)-2\cos(\varphi_2)}{4-2\cos(\theta_1)-2\cos(\theta_2)}\cdot\gamma^\theta_\varphi,\quad |\theta|>0.
\end{split}
\end{equation}
Return to the algorithm of the hybrid iterative method, the network correction step is performed following $M-1$ GS iteration steps, so that $\bar{c}(\varphi)$ can be written as
\begin{equation}
\bar{c}(\varphi)=\bar{c}^{(M-1)}(\varphi),
\end{equation} 
then we have the estimate
\begin{equation}\label{eq:estimate_zeta_rho}
|\bar{c}(\varphi)|=|\bar{c}^{(M-1)}(\varphi)|\leq(\zeta_{\rho})^{M-1}|\bar{c}^{(0)}(\varphi)|,\quad |\varphi|\geq\rho\pi,
\end{equation}
and the low-frequency component $|\bar{c}^{(M-1)}(\varphi)|$ ($0<|\varphi|<\rho\pi$) has a smoothing factor $\zeta_0$ greater than $\zeta_{\rho}$, i.e.
\begin{equation}\label{eq:estimate_zeta_0}
|\bar{c}^{(M-1)}(\varphi)|\leq(\zeta_0)^{M-1}|\bar{c}^{(0)}(\varphi)|,\quad \zeta_\rho<\zeta_0<1,\quad 0<|\varphi|<\rho\pi.
\end{equation}
Now denote
\begin{equation}
e^{(M)}:=\sum_{\varphi\in\Theta,|\varphi|>0}\tilde{c}(\varphi)\Psi^{\varphi},\quad e^{(0)}:=\sum_{\varphi\in\Theta,|\varphi|>0}\bar{c}^{(0)}(\varphi)\Psi^{\varphi},
\end{equation}
then
\begin{equation}
\begin{split}
\norm{e^{(M)}}_2=&\sqrt{\sum_{|\theta|>0}|\tilde{c}(\theta)|^2} \\
\leq&\sum_{|\theta|>0}|\tilde{c}(\theta)| \\
\leq&\sum_{|\theta|>0}\sum_{|\varphi|>0}|\bar{c}(\varphi)|\cdot\left|\frac{4-2\cos(\varphi_1)-2\cos(\varphi_2)}{4-2\cos(\theta_1)-2\cos(\theta_2)}\right|\cdot|\gamma^\theta_\varphi|,\\
\end{split}
\end{equation}
by (\ref{eq:c_theta_update}). With (\ref{eq:estimate_zeta_rho}) and (\ref{eq:estimate_zeta_0}), we further derive that
\begin{equation}
\begin{split}
\norm{e^{(M)}}_2\leq&\sum_{|\varphi|>0}|\bar{c}^{(M-1)}(\varphi)|\cdot\sum_{|\theta|>0}\left|\frac{4-2\cos(\varphi_1)-2\cos(\varphi_2)}{4-2\cos(\theta_1)-2\cos(\theta_2)}\right|\cdot|\gamma^\theta_\varphi| \\
\leq&\sum_{0<|\varphi|<\rho\pi}(\zeta_0)^{M-1}|\bar{c}^{(0)}(\varphi)|\cdot\sum_{|\theta|>0}\left|\frac{4-2\cos(\varphi_1)-2\cos(\varphi_2)}{4-2\cos(\theta_1)-2\cos(\theta_2)}\right|\cdot|\gamma^\theta_\varphi| \\
&+\sum_{|\varphi|\geq\rho\pi}(\zeta_{\rho})^{M-1}|\bar{c}^{(0)}(\varphi)|\cdot\sum_{|\theta|>0}\left|\frac{4-2\cos(\varphi_1)-2\cos(\varphi_2)}{4-2\cos(\theta_1)-2\cos(\theta_2)}\right|\cdot|\gamma^\theta_\varphi|\\
\leq&\left((\zeta_0)^{M-1}\cdot\epsilon+(\zeta_{\rho})^{M-1}\cdot R\right)\norm{e^{(0)}}_2,
\end{split}
\end{equation}
where
\begin{equation}
\begin{cases}
\epsilon=\sum_{0<|\varphi|<\rho\pi,|\theta|>0}\left|\frac{4-2\cos(\varphi_1)-2\cos(\varphi_2)}{4-2\cos(\theta_1)-2\cos(\theta_2)}\right|\cdot|\gamma^\theta_\varphi|,\\
R=\sum_{|\varphi|\geq\rho\pi,|\theta|>0}\left|\frac{4-2\cos(\varphi_1)-2\cos(\varphi_2)}{4-2\cos(\theta_1)-2\cos(\theta_2)}\right|\cdot|\gamma^\theta_\varphi|,
\end{cases}
\end{equation}
and $\epsilon$ is supposed to be small as discussed before. The relationship
\begin{equation}
\norm{e^{(M)}}_2\leq\left((\zeta_0)^{M-1}\cdot\epsilon+(\zeta_{\rho})^{M-1}\cdot R\right)\norm{e^{(0)}}_2
\end{equation}
immediately leads to the final upper bound of the average convergence rate
\begin{equation}
{\rm Rate}(M)=\zeta_0\cdot\left(\frac{\epsilon}{\zeta_0}+\frac{R}{\zeta_\rho}\cdot\left(\frac{\zeta_{\rho}}{\zeta_0}\right)^{M-1}\right)^{\frac{1}{M}}.
\end{equation}

According to the above deduction, we have studied the convergence rate of this hybrid iterative method for 2-d problem with GS iteration, which is consistent with Eq. (\ref{eq:convergence_rate}). It is expected to promote the discussion and obtain further theoretical results for more complicated cases. We will keep it for future research due to space limitations.


\section{Numerical experiments}\label{sec:experiments}
The experiments are implemented in \verb!c++! with \textbf{Eigen} library for sparse matrices and basic iterations, and \textbf{LibTorch} library for model inference. The used models are pre-trained via \textbf{PyTorch}. The running devices are \textbf{Intel(R) Core(TM) i7-10750H} for CPU and \textbf{NVIDIA GeForce RTX 2070} for GPU.
\subsection{One-dimensional Poisson equation}
Consider the 1-d Poisson equation
\begin{equation}\label{eq:1d_poisson}
	\begin{cases}
		-\nabla\cdot(k\nabla u)=f\quad \mbox{in} \ (0,1),
		\\
		u(0)=u(1)=0.
	\end{cases}	
\end{equation}
To firstly construct the dataset, we generate $k$ and $f$ by Gaussian Process (GP) with RBF kernel. $k$ is generated by GP with mean 1, std 0.2, length scale 0.1, while $f$ is with mean 0, std 1, length scale 0.1. We totally generate 5000 data points as training set $\T=\{(k_i,f_i),u_i\}_{i=1}^{5000}$. After training an MIONet on the dataset $\T$, we obtain an MIONet-based iteration step $\bar{\mathcal{M}}:\R^n\to\R^n$ (linear), where $n=48$ is the number of the interior nodal points in the uniform grid for the interval $(0,1)$. Here the size of MIONet is $[50, 100, 100, 100]$ for $k$'s branch net, $[50, 100]$ for $f$'s branch net, and $[1,100,100,100]$ for the trunk net. Note that we remove the bias in $f$'s branch net and also the bias $b$ in Eq. (\ref{eq:MIONet_low}) to guarantee the linearity w.r.t. $f$. Figure \ref{fig:1d_poisson_prediction} shows an example of a couple of inputs $k$ and $f$ as well as the corresponding solution $u$ and the MIONet prediction.

\begin{figure}[htbp]
    \centering
    \includegraphics[width=1.0\textwidth]{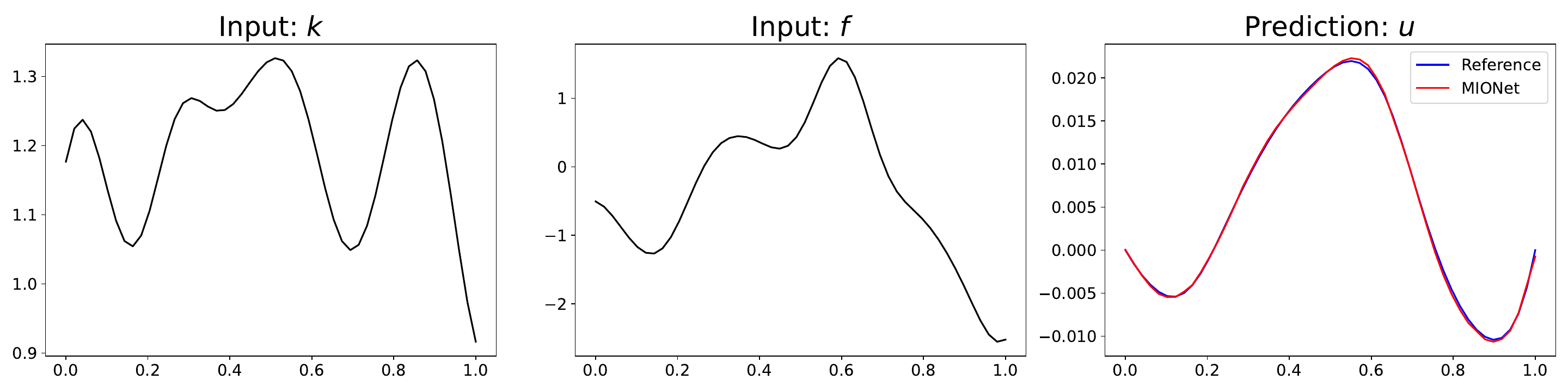}
    \caption{\textbf{An example of one prediction of MIONet for the 1-d Poisson equation.} }
    \label{fig:1d_poisson_prediction}
\end{figure}

We then apply the $P_1$ element together with the Richardson iteration (set $\omega=\frac{h}{4}=\frac{1}{4\times 49}$) to solve this 1-d Poisson equation for $k=1$. The MIONet iteration $\bar{\mathcal{M}}$ is performed every $M=20$ steps to correct low-frequency errors in the Richardson-MIONet method (we also show the convergence rate versus $M$ in Figure \ref{fig:Poisson_1d_versusM}, which is consistent with the theoretical results as Eq. (\ref{eq:convergence_rate}) and Figure \ref{fig:rate_M}). The error threshold for stopping iteration is $1\times 10^{-14}$. We list the numbers of iterations of the Richardson and the Richardson-MIONet in Table \ref{tab:1d_poisson}. The number of iterations of the original Richardson method is 157 times that of the Richardson-MIONet method. Since the 1-d case converges too fast, we leave the study of the convergence time to the later 2-d experiment.

\begin{table}[htbp]
\centering
\begin{tabular}{|c|c|c|}
\hline
          & $\#$Iterations  & Speed up\\ \hline
Richardson        & 28396       &      /    \\ \hline
Richardson-MIONet & 181        & $\times$157      \\ \hline
\end{tabular}
\caption{\textbf{Comparison of the Richardson iterative method and the Richardson-MIONet hybrid iterative method for the 1-d Poisson equation.} The number of iterations of the original Richardson method is 157 times that of the Richardson-MIONet method.}
\label{tab:1d_poisson}
\end{table}

\begin{figure}[htbp]
    \centering
    \includegraphics[width=0.6\textwidth]{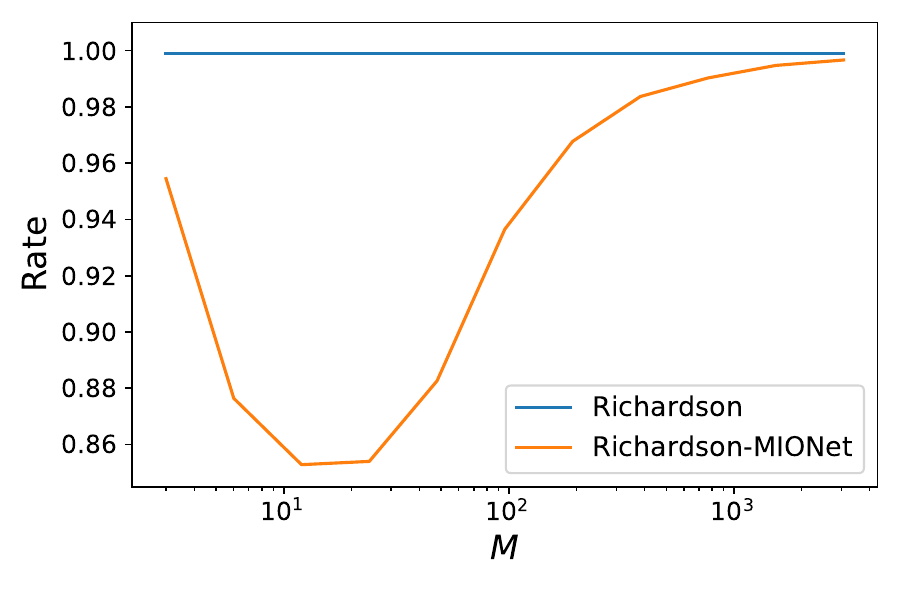}
    \caption{\textbf{Convergence rate of the Richardson-MIONet method versus the correction period $M$.} The tendency is consistent with the theoretical results as Eq. (\ref{eq:convergence_rate}) and Figure \ref{fig:rate_M}.}
    \label{fig:Poisson_1d_versusM}
\end{figure}

We further observe the spectral error of the iterative methods, and the results are shown in Figure \ref{fig:1d_poisson_spectrum}. The high-frequency errors are quickly eliminated by the Richardson iteration in both two methods. However, the low-frequency errors are difficult to eliminate via the original Richardson method. In each correction step, MIONet efficiently modifies the low-frequency errors. Although MIONet brings new high-frequency errors, they will be quickly clear up soon. After about 180 steps, the Richardson-MIONet method has already achieved a machine accuracy, while the original Richardson method still keeps a significant error.

\begin{figure}[htbp]
    \centering
    \includegraphics[width=1.0\textwidth]{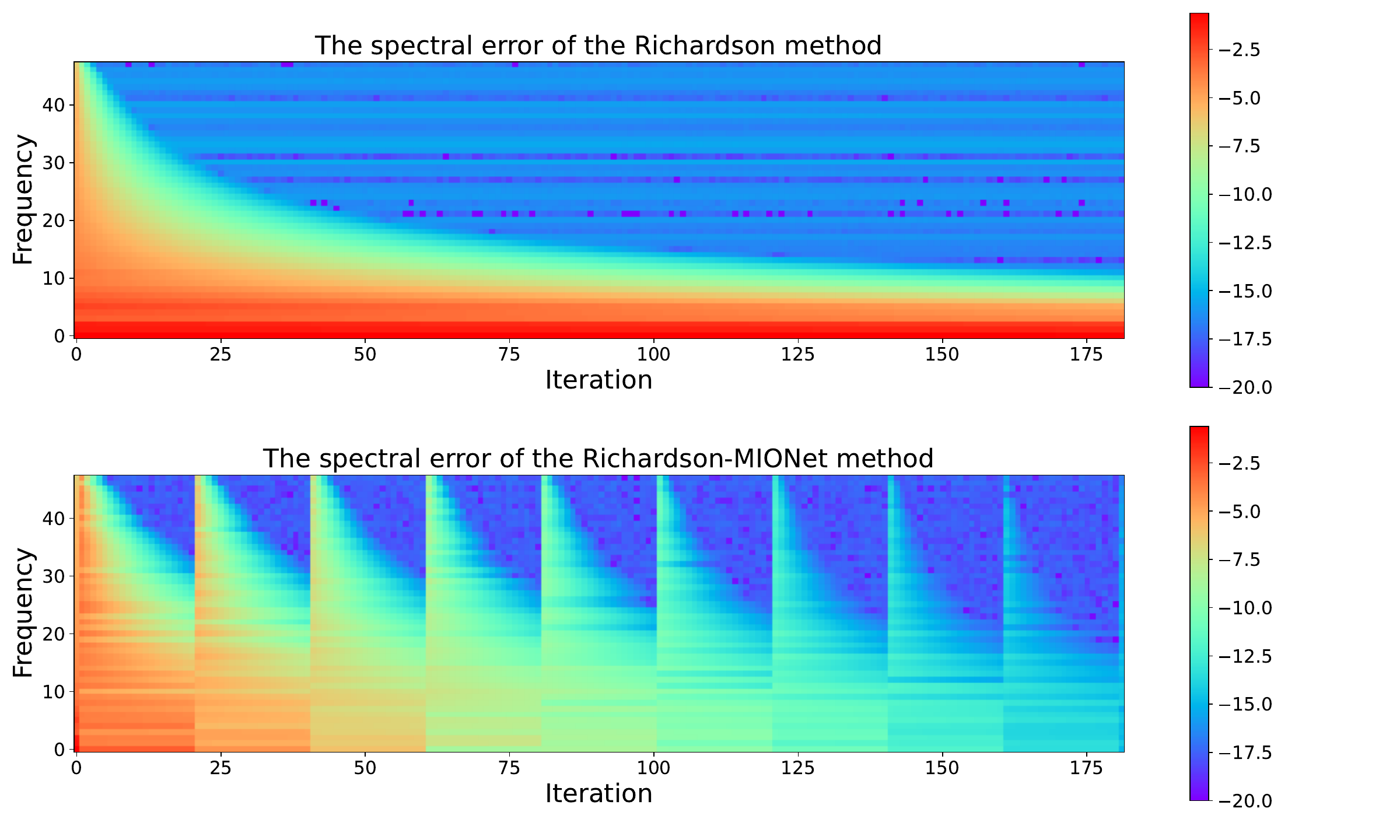}
    \caption{\textbf{The spectral error of the iterative methods.} The numerical value of each pixel is $\log_{10}|{\rm spectral\ error}|$. The high-frequency errors are quickly eliminated by the Richardson iteration in both two methods. However, the low-frequency errors are difficult to eliminate via the original Richardson method. MIONet efficiently modifies the low-frequency errors every $M=20$ steps. Although MIONet brings new high-frequency errors, they will be quickly clear up soon. After about 180 steps, the Richardson-MIONet method has already achieved a machine accuracy, while the original Richardson method still keeps a significant error.}
    \label{fig:1d_poisson_spectrum}
\end{figure}

\subsection{Two-dimensional Poisson equation}
Consider the 2-d Poisson equation
\begin{equation}\label{eq:2d_poisson}
	\begin{cases}
		-\nabla\cdot(k\nabla u)=f &\quad \mbox{in} \; \Omega,
		\\
		u = 0 &\quad \mbox{on} \; \partial\Omega,
	\end{cases}	
\end{equation}
where $\Omega=(0,1)^2$. We also generate $k$ and $f$ as the same setting as the 1-d case. The difference is that we set the length scale to 0.2. We totally generate 5000 data points as training set $\T=\{(k_i,f_i),u_i\}_{i=1}^{5000}$. After training an MIONet on the dataset $\T$, we obtain an approximate solution operator. Here the size of MIONet is $[100^2, 500, 500, 500]$ for $k$'s branch net, $[100^2, 500]$ for $f$'s branch net, and $[2,500,500,500]$ for the trunk net. We also remove the corresponding biases as in the 1-d case. An example of a couple of inputs $k$ and $f$ as well as the corresponding solution $u$ and the MIONet prediction is shown in Figure \ref{fig:2d_poisson}.

\begin{figure}[htbp]
    \centering
    \includegraphics[width=1.0\textwidth]{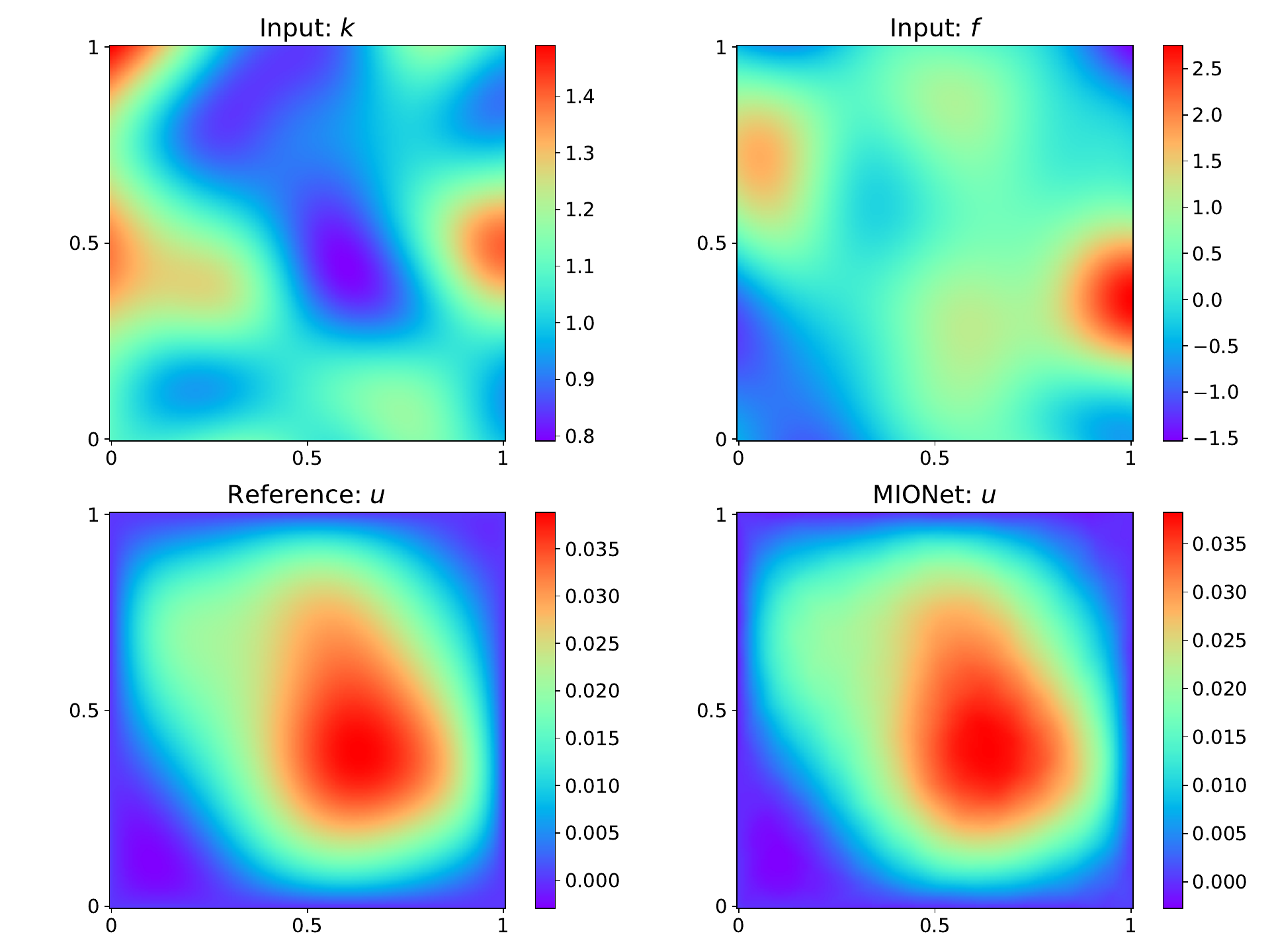}
    \caption{\textbf{An example of one prediction of MIONet for the 2-d Poisson equation.} }
    \label{fig:2d_poisson}
\end{figure}

We then apply the $P_1$ element together with the Gauss-Seidel (GS) iteration to solve this 2-d Poisson equation with grid size $1025\times 1025$. The MIONet iteration $\bar{\mathcal{M}}:\R^n\to\R^n$ ($n=1023\times1023$) is performed every $M$ steps to correct low-frequency errors in the GS-MIONet method. The error threshold for stopping iteration is $1\times 10^{-12}$. We list the numbers of convergence iterations and the convergence time of GS-MIONet given different $M$ in Table \ref{tab:2d_poisson_1024}, \ref{tab:2d_poisson_1024_2}. We also show the trends of the convergence steps and the convergence time versus $M$ in Figure \ref{fig:Poisson_2d_versus_M}. The best speed of the GS-MIONet method is nearly 30 times that of the original GS method at $M=8000$, while the number of convergence iterations reaches its minimum at $M=4000$. This gap exists since the inference time of MIONet accounts for a significant portion in the hybrid iteration. When $M>2556893$ (i.e. the number of convergence iterations of the GS method), MIONet only gives an initial guess during the hybrid iteration, which provides a $22\%$ acceleration.

\begin{table}[htbp]
\centering
\begin{tabular}{|c|c|c|c|c|c|c|c|c|c|}
\hline
$M$          & GS & 500 & 1000 & 2000   & 4000 & 8000 & 16000 & 32000 & 64000  \\ \hline
$\#$Iterations & 2556893 &  div. & 167001 & 94001 & 78471 & 82484 & 96001 & 160001 & 256001    \\ \hline
Time (s)   & 27103 &  div. & 2641 & 1222 & 923 & 919 & 1049 & 1722 & 2750    \\ \hline
Speed up   & /  &  div. & $\times$10.3  & $\times$22.2  & $\times$29.4  & $\times$29.5    & $\times$25.8    & $\times$15.7    & $\times$9.9       \\ \hline
\end{tabular}
\caption{\textbf{The number of convergence iterations and the convergence time of GS-MIONet hybrid iterative method for the 2-d Poisson equation versus the correction period $M$ (part 1).} The size of grid is $1025\times1025$. ``div.'' means the iteration diverges under the given $M$.}
\label{tab:2d_poisson_1024}
\end{table}

\begin{table}[htbp]
\centering
\begin{tabular}{|c|c|c|c|c|c|c|c|}
\hline
$M$         & 128000 & 256000 & 512000 & 1024000 & 2048000 & 4096000 & 8192000 \\ \hline
$\#$Iterations & 384640 & 768001 & 1024001 & 1439557 & 2048001 & 2071460 & 2071460  \\ \hline
Time (s)   & 4079 & 8173 & 10804 & 15467 & 22058 & 22181 & 22181  \\ \hline
Speed up  & $\times$6.6   & $\times$3.3   & $\times$2.5  & $\times$1.75   & $\times$1.23  & $\times$1.22   & $\times$1.22 \\ \hline
\end{tabular}
\caption{\textbf{The number of convergence iterations and the convergence time of GS-MIONet hybrid iterative method for the 2-d Poisson equation versus the correction period $M$ (part 2).} The size of grid is $1025\times1025$. When $M>2556893$ (i.e. the number of convergence iterations of the GS method), MIONet only gives an initial guess during the hybrid iteration.}
\label{tab:2d_poisson_1024_2}
\end{table}

\begin{figure}[htbp]
    \centering
    \includegraphics[width=1.0\textwidth]{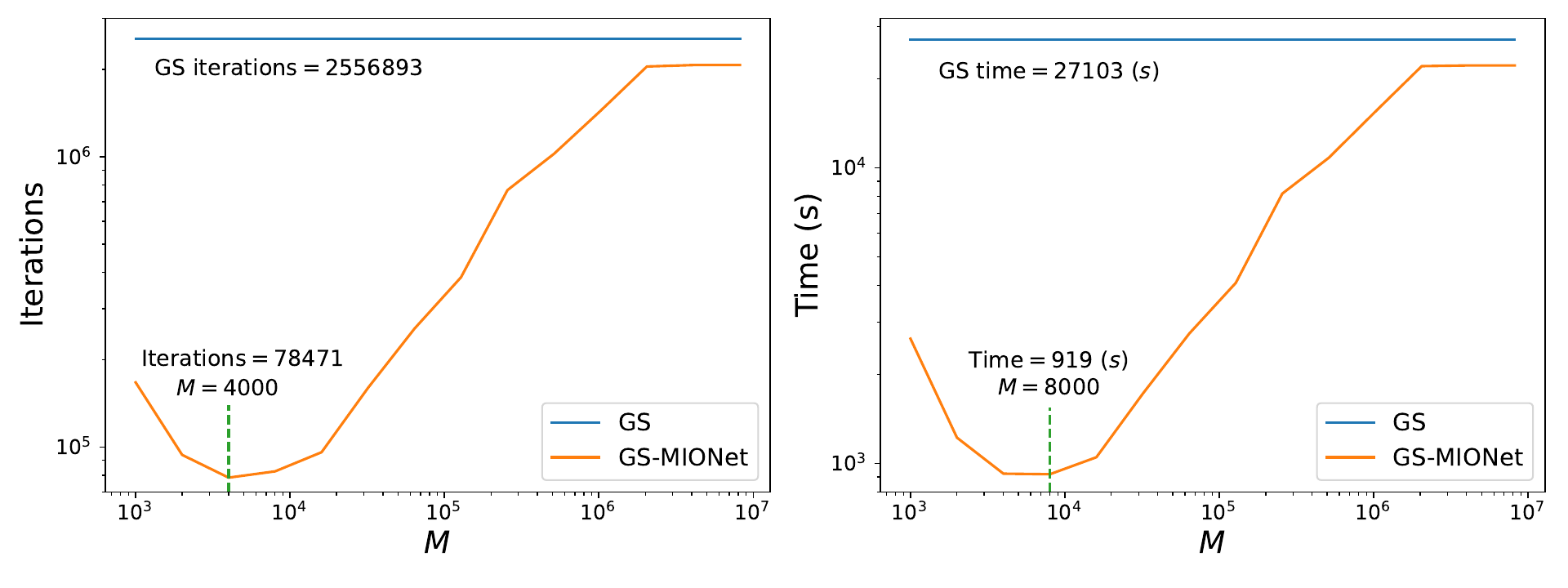}
    \caption{\textbf{Performance of GS-MIONet versus the correction period $M$.} We use the MIONet trained on $100\times 100$ grid, and to perform hybrid iteration on $1025\times 1025$ grid. The number of convergence iterations reaches its minimum at $M=4000$, while the convergence time reaches its minimum at $M=8000$. The best convergence time of GS-MIONet method is 919 seconds, the speed of which is nearly 30 times that of the original GS method. When $M$ is large enough, the MIONet only gives an initial guess during the hybrid iteration, which provides a $22\%$ acceleration.}
    \label{fig:Poisson_2d_versus_M}
\end{figure}

Next we compare the hybrid iterative method to the multigrid method. We first solve this equation by different levels of multigrid methods, then for each case, we again test the corresponding multigrid-MIONet hybrid iterative method. In this hybrid method, we regard a V-cycle of the multigrid method as one step, and insert one network correction step every $M$ V-cycles. The results are recorded in Table \ref{tab:2d_poisson_mg} and visually presented in Figure \ref{fig:2d_poisson_mg}. We observe that the proposed hybrid iterative method can flexibly utilize the advanced multigrid method. The speed of GS-MIONet is between the multigrid methods of level 3 and 4. There is an acceleration effect all the way up to level 6. For a high level ($\geq 7$) multigrid method, it converges so fast that the inference time of MIONet is larger than the multigrid's convergence time. Unsurprisingly in such a case the hybrid method converges slower compared to the original multigrid method. However, in practice it is costly to construct high level multigrid, while the use of the hybrid iterative method has no extra burden for users as long as the pre-trained model is provided.

\begin{table}[htbp]
\centering
\begin{tabular}{|c|c|c|c|c|c|c|c|c|c|c|}
\hline
Method     & GS & Mg2 & Mg3 & Mg4 & Mg5 & Mg6 & Mg7 & Mg8 & Mg9 & Mg10 \\ \hline
Basic time (s)  & 27103 & 8880 & 2210 & 567 & 139 & 35 & 8.7 & 2.4 & 1.7 & 1.7  \\ \hline
Hybrid time (s)  & 919 & 337 & 118 & 56 & 34 & 22 & 12.2 & 7.0 & 6.4 & 6.4 \\ \hline
Speed up  & $\times$29.5 & $\times$26.4 & $\times$18.7 & $\times$10.1 & $\times$4.1 & $\times$1.6 & / & / & / & / \\ \hline
\end{tabular}
\caption{\textbf{Multigrid-MIONet method.} The time of multigrid methods and corresponding hybrid methods on $1025\times1025$ grid. The correction period $M$ is tuned for each hybrid case.}
\label{tab:2d_poisson_mg}
\end{table}

\begin{figure}[htbp]
    \centering
    \includegraphics[width=1.0\textwidth]{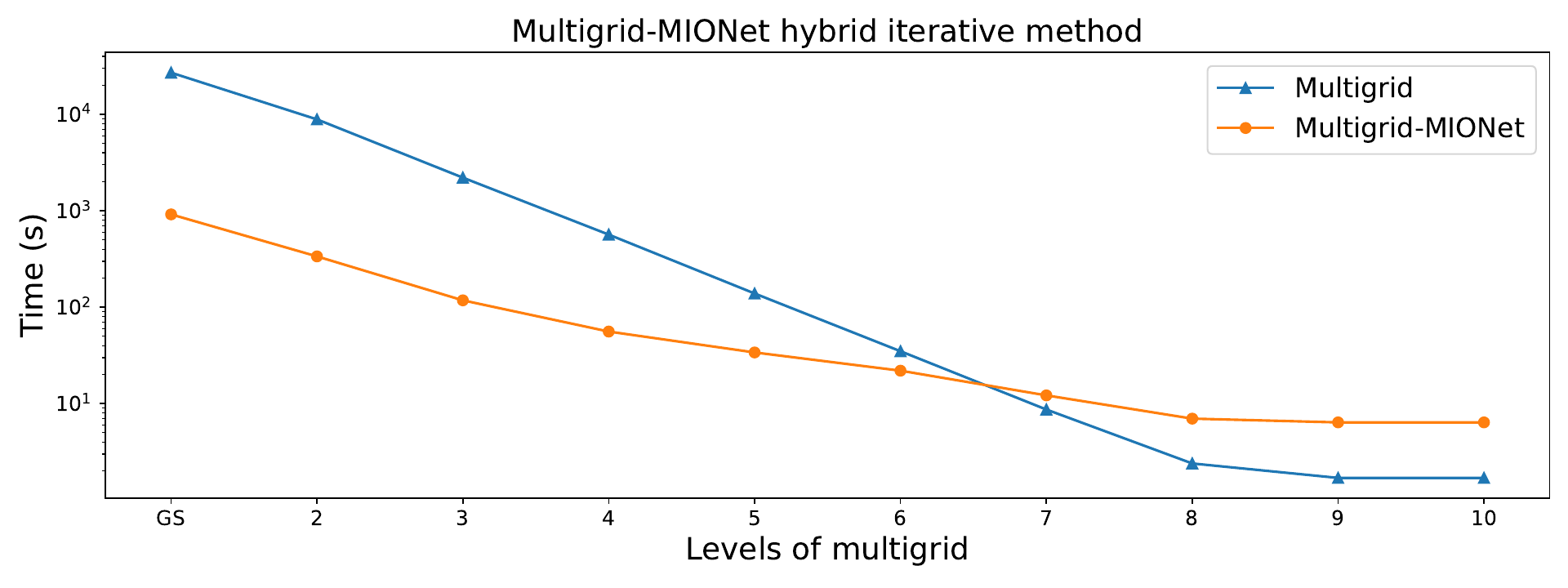}
    \caption{\textbf{Multigrid-MIONet method for the 2-d Poisson equation on $1025\times1025$ grid.} There is an acceleration effect all the way up to level 6, and the speed of GS-MIONet is between the multigrid methods of level 3 and 4.}
    \label{fig:2d_poisson_mg}
\end{figure}

\subsection{Inhomogeneous boundary condition}
Finally we consider the 2-d Poisson equation with inhomogeneous boundary condition:
\begin{equation}
	\begin{cases}
		-\nabla\cdot(k\nabla u)=f &\quad \mbox{in} \; \Omega,
		\\
		u = g &\quad \mbox{on} \; \partial\Omega,
	\end{cases}	
\end{equation}
where $\Omega=(0,1)^2$, and the boundary condition $g$ is also changing. We generate $k$ and $f$ as the same setting as the previous 2-d case. The $g$ is regarded as a function with a period of 4 (anticlockwise along the boundary), which is generated by GP with exponential sine squared kernel, where we set mean, std, length scale and period to 0, 0.05, 1 and 4, respectively. We totally generate 5000 data points as training set $\T=\{(k_i,(f_i, g_i)),u_i\}_{i=1}^{5000}$. After training an MIONet on the dataset $\T$, we obtain an approximate solution operator. Here the size of MIONet is $[100^2, 500, 500, 500]$ for $k$'s branch net, $[100^2, 500]$ for $(f,g)$'s branch net, and $[2,500,500,500]$ for the trunk net. We remove the corresponding biases as before. An example of inputs $k$, $f$ and $g$ as well as the corresponding solution $u$ and the MIONet prediction is shown in Figure \ref{fig:2d_poisson_boundary}.

\begin{figure}[htbp]
    \centering
    \includegraphics[width=1.0\textwidth]{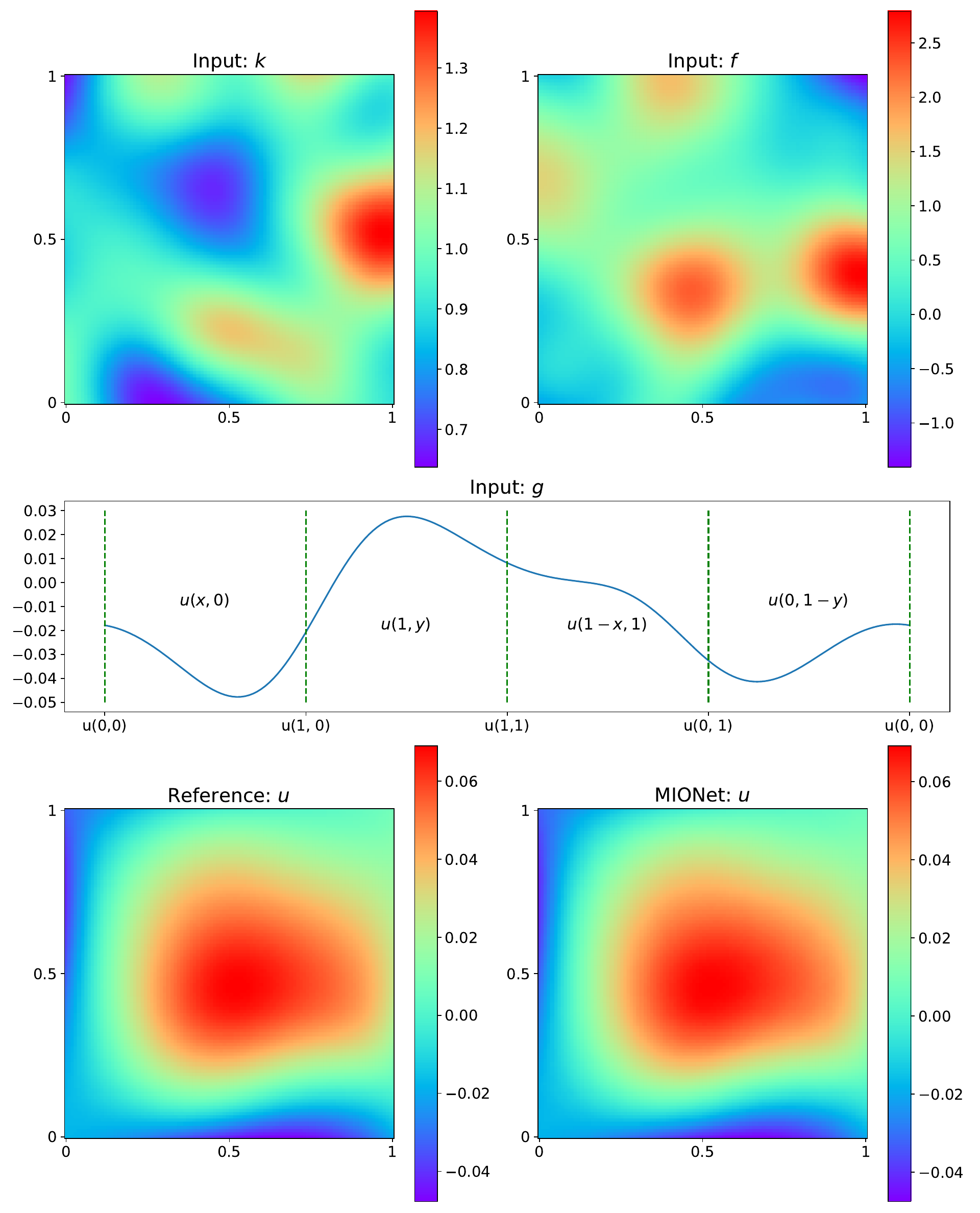}
    \caption{\textbf{An example of one prediction of MIONet for the 2-d Poisson equation with inhomogeneous boundary condition.} }
    \label{fig:2d_poisson_boundary}
\end{figure}

We then apply the $P_1$ element together with the GS iteration to solve this equation on the $500\times 500$ uniform grid. The MIONet iteration $\bar{\mathcal{M}}:\R^n\to\R^n$ ($n=500\times500$) is performed every $M=1600$ steps to correct low-frequency errors in the GS-MIONet method. The error threshold for stopping iteration is $1\times 10^{-12}$. We show the number of convergence iterations and the convergence time of the GS(-MIONet) method in Table \ref{tab:2d_poisson_boundary}. The speed of the GS-MIONet method is nearly 24 times that of the original GS method.

\begin{table}[htbp]
\centering
\begin{tabular}{|c|c|c|c|}
\hline
          & $\#$Iterations & Time (s)  & Speed up\\ \hline
GS        & 672230   &   1652  &      /    \\ \hline
GS-MIONet & 20963  &  69  & $\times$23.9      \\ \hline
\end{tabular}
\caption{\textbf{Comparison of the GS iterative method and the GS-MIONet hybrid iterative method for the 2-d Poisson equation with inhomogeneous boundary condition.} The size of grid is $500\times500$. The speed of the GS-MIONet method is 23.9 times that of the original GS method.}
\label{tab:2d_poisson_boundary}
\end{table}

\section{Conclusions}\label{sec:conclusions}
We have proposed a hybrid iterative method based on MIONet for PDEs, which combines the traditional numerical iterative solver and the neural operator, and further systematically analyzed its theoretical properties, including the convergence condition, the spectral behavior, as well as the convergence rate, in terms of the errors of the discretization and the model inference. We have shown the theoretical results for Richardson (damped Jacobi) and Gauss-Seidel. An upper bound of the convergence rate of the hybrid method w.r.t. the correction period $M$ is given, which indicates an optimal $M$ to make the hybrid iteration converge fastest. Several numerical examples including the hybrid Richardson (Gauss-Seidel) iteration for the 1-d (2-d) Poisson equation are presented to verify our theoretical results. In addition, we tested the hybrid iteration utilizing the multigrid method, it still reflects an excellent acceleration effect, unless the level of multigrid is very high. Furthermore, we have achieved the hybrid iteration for the Poisson equation with inhomogeneous boundary condition, which has not been studied in the work of HINTS. So far, we are able to deal with the Poisson equation in which all the parameters are changing. As a meshless acceleration method, it is provided with enormous potentials for practice applications.

\bibliographystyle{abbrv}
\bibliography{references}

\begin{thebibliography}{10}

\bibitem{brandt1977multi}
A.~Brandt.
\newblock Multi-level adaptive solutions to boundary-value problems.
\newblock {\em Mathematics of computation}, 31(138):333--390, 1977.

\bibitem{brandt1986algebraic}
A.~Brandt.
\newblock Algebraic multigrid theory: The symmetric case.
\newblock {\em Applied mathematics and computation}, 19(1-4):23--56, 1986.

\bibitem{brenner2008mathematical}
S.~C. Brenner.
\newblock {\em The mathematical theory of finite element methods}.
\newblock Springer, 2008.

\bibitem{e2019barron}
W.~E, C.~Ma, and L.~Wu.
\newblock Barron spaces and the compositional function spaces for neural
  network models.
\newblock {\em arXiv preprint arXiv:1906.08039}, 2019.

\bibitem{yu2018deep}
W.~E and B.~Yu.
\newblock The deep {Ritz} method: a deep learning-based numerical algorithm for
  solving variational problems.
\newblock {\em Communications in Mathematics and Statistics}, 6(1):1--12, 2018.

\bibitem{eymard2000finite}
R.~Eymard, T.~Gallou{\"e}t, and R.~Herbin.
\newblock Finite volume methods.
\newblock {\em Handbook of numerical analysis}, 7:713--1018, 2000.

\bibitem{gupta2021multiwavelet}
G.~Gupta, X.~Xiao, and P.~Bogdan.
\newblock Multiwavelet-based operator learning for differential equations.
\newblock {\em Advances in neural information processing systems},
  34:24048--24062, 2021.

\bibitem{hackbusch2013multi}
W.~Hackbusch.
\newblock {\em Multi-grid methods and applications}, volume~4.
\newblock Springer Science \& Business Media, 2013.

\bibitem{he2023mgno}
J.~He, X.~Liu, and J.~Xu.
\newblock {MgNO}: Efficient parameterization of linear operators via multigrid.
\newblock {\em arXiv preprint arXiv:2310.19809}, 2023.

\bibitem{hu2023experimental}
J.~Hu and P.~Jin.
\newblock Experimental observation on a low-rank tensor model for eigenvalue
  problems.
\newblock {\em arXiv preprint arXiv:2302.00538}, 2023.

\bibitem{huang2020int}
J.~Huang, H.~Wang, and H.~Yang.
\newblock Int-deep: A deep learning initialized iterative method for nonlinear
  problems.
\newblock {\em Journal of computational physics}, 419:109675, 2020.

\bibitem{jiang2023fourier}
Z.~Jiang, M.~Zhu, D.~Li, Q.~Li, Y.~O. Yuan, and L.~Lu.
\newblock {Fourier-MIONet}: Fourier-enhanced multiple-input neural operators
  for multiphase modeling of geological carbon sequestration.
\newblock {\em arXiv preprint arXiv:2303.04778}, 2023.

\bibitem{jin2022mionet}
P.~Jin, S.~Meng, and L.~Lu.
\newblock {MIONet}: Learning multiple-input operators via tensor product.
\newblock {\em SIAM Journal on Scientific Computing}, 44(6):A3490--A3514, 2022.

\bibitem{karniadakis2021physics}
G.~E. Karniadakis, I.~G. Kevrekidis, L.~Lu, P.~Perdikaris, S.~Wang, and
  L.~Yang.
\newblock Physics-informed machine learning.
\newblock {\em Nature Reviews Physics}, 3(6):422--440, 2021.

\bibitem{kovachki2021universal}
N.~Kovachki, S.~Lanthaler, and S.~Mishra.
\newblock On universal approximation and error bounds for {Fourier} neural
  operators.
\newblock {\em The Journal of Machine Learning Research}, 22(1):13237--13312,
  2021.

\bibitem{lanthaler2022error}
S.~Lanthaler, S.~Mishra, and G.~E. Karniadakis.
\newblock Error estimates for {DeepONets}: A deep learning framework in
  infinite dimensions.
\newblock {\em Transactions of Mathematics and Its Applications}, 6(1):tnac001,
  2022.

\bibitem{li2020fourier}
Z.~Li, N.~Kovachki, K.~Azizzadenesheli, B.~Liu, K.~Bhattacharya, A.~Stuart, and
  A.~Anandkumar.
\newblock Fourier neural operator for parametric partial differential
  equations.
\newblock {\em arXiv preprint arXiv:2010.08895}, 2020.

\bibitem{li2020neural}
Z.~Li, N.~Kovachki, K.~Azizzadenesheli, B.~Liu, K.~Bhattacharya, A.~Stuart, and
  A.~Anandkumar.
\newblock Neural operator: Graph kernel network for partial differential
  equations.
\newblock {\em arXiv preprint arXiv:2003.03485}, 2020.

\bibitem{lu2019deeponet}
L.~Lu, P.~Jin, and G.~E. Karniadakis.
\newblock {DeepONet}: Learning nonlinear operators for identifying differential
  equations based on the universal approximation theorem of operators.
\newblock {\em arXiv preprint arXiv:1910.03193}, 2019.

\bibitem{lu2021learning}
L.~Lu, P.~Jin, G.~Pang, Z.~Zhang, and G.~E. Karniadakis.
\newblock Learning nonlinear operators via {DeepONet} based on the universal
  approximation theorem of operators.
\newblock {\em Nature machine intelligence}, 3(3):218--229, 2021.

\bibitem{lu2021physics}
L.~Lu, R.~Pestourie, W.~Yao, Z.~Wang, F.~Verdugo, and S.~G. Johnson.
\newblock Physics-informed neural networks with hard constraints for inverse
  design.
\newblock {\em SIAM Journal on Scientific Computing}, 43(6):B1105--B1132, 2021.

\bibitem{mandel1988algebraic}
J.~Mandel.
\newblock Algebraic study of multigrid methods for symmetric, definite
  problems.
\newblock {\em Applied mathematics and computation}, 25(1):39--56, 1988.

\bibitem{marcati2023exponential}
C.~Marcati and C.~Schwab.
\newblock Exponential convergence of deep operator networks for elliptic
  partial differential equations.
\newblock {\em SIAM Journal on Numerical Analysis}, 61(3):1513--1545, 2023.

\bibitem{rahaman2019spectral}
N.~Rahaman, A.~Baratin, D.~Arpit, F.~Draxler, M.~Lin, F.~Hamprecht, Y.~Bengio,
  and A.~Courville.
\newblock On the spectral bias of neural networks.
\newblock In {\em International Conference on Machine Learning}, pages
  5301--5310. PMLR, 2019.

\bibitem{rahman2022u}
M.~A. Rahman, Z.~E. Ross, and K.~Azizzadenesheli.
\newblock U-no: U-shaped neural operators.
\newblock {\em arXiv preprint arXiv:2204.11127}, 2022.

\bibitem{raissi2019physics}
M.~Raissi, P.~Perdikaris, and G.~E. Karniadakis.
\newblock Physics-informed neural networks: A deep learning framework for
  solving forward and inverse problems involving nonlinear partial differential
  equations.
\newblock {\em Journal of Computational physics}, 378:686--707, 2019.

\bibitem{raonic2023convolutional}
B.~Raoni{\'c}, R.~Molinaro, T.~Rohner, S.~Mishra, and E.~de~Bezenac.
\newblock {Convolutional Neural Operators}.
\newblock {\em arXiv preprint arXiv:2302.01178}, 2023.

\bibitem{ruge1987algebraic}
J.~W. Ruge and K.~St{\"u}ben.
\newblock Algebraic multigrid.
\newblock In {\em Multigrid methods}, pages 73--130. SIAM, 1987.

\bibitem{shen2011spectral}
J.~Shen, T.~Tang, and L.-L. Wang.
\newblock {\em Spectral methods: algorithms, analysis and applications},
  volume~41.
\newblock Springer Science \& Business Media, 2011.

\bibitem{siegel2022high}
J.~W. Siegel and J.~Xu.
\newblock High-order approximation rates for shallow neural networks with
  cosine and {${\rm ReLU}^k$} activation functions.
\newblock {\em Applied and Computational Harmonic Analysis}, 58:1--26, 2022.

\bibitem{sirignano2018dgm}
J.~Sirignano and K.~Spiliopoulos.
\newblock {DGM}: A deep learning algorithm for solving partial differential
  equations.
\newblock {\em Journal of computational physics}, 375:1339--1364, 2018.

\bibitem{smith1985numerical}
G.~D. Smith.
\newblock {\em Numerical solution of partial differential equations: finite
  difference methods}.
\newblock Oxford university press, 1985.

\bibitem{trefethen2022numerical}
L.~N. Trefethen and D.~Bau.
\newblock {\em Numerical linear algebra}, volume 181.
\newblock Siam, 2022.

\bibitem{trottenberg2000multigrid}
U.~Trottenberg, C.~W. Oosterlee, and A.~Schuller.
\newblock {\em Multigrid}.
\newblock Elsevier, 2000.

\bibitem{wang2021learning}
S.~Wang, H.~Wang, and P.~Perdikaris.
\newblock Learning the solution operator of parametric partial differential
  equations with physics-informed {DeepONets}.
\newblock {\em Science advances}, 7(40):eabi8605, 2021.

\bibitem{wang2022tensor}
Y.~Wang, P.~Jin, and H.~Xie.
\newblock Tensor neural network and its numerical integration.
\newblock {\em arXiv preprint arXiv:2207.02754}, 2022.

\bibitem{wu2023solving}
H.~Wu, T.~Hu, H.~Luo, J.~Wang, and M.~Long.
\newblock {Solving High-Dimensional PDEs with Latent Spectral Models}.
\newblock {\em arXiv preprint arXiv:2301.12664}, 2023.

\bibitem{xu1989theory}
J.~Xu.
\newblock {\em Theory of multilevel methods}, volume 8924558.
\newblock Cornell University Ithaca, NY, 1989.

\bibitem{xu2017algebraic}
J.~Xu and L.~Zikatanov.
\newblock Algebraic multigrid methods.
\newblock {\em Acta Numerica}, 26:591--721, 2017.

\bibitem{xu2019frequency}
Z.-Q.~J. Xu, Y.~Zhang, T.~Luo, Y.~Xiao, and Z.~Ma.
\newblock Frequency principle: Fourier analysis sheds light on deep neural
  networks.
\newblock {\em arXiv preprint arXiv:1901.06523}, 2019.

\bibitem{zhang2022hybrid}
E.~Zhang, A.~Kahana, E.~Turkel, R.~Ranade, J.~Pathak, and G.~E. Karniadakis.
\newblock {A hybrid iterative numerical transferable solver (HINTS) for PDEs
  based on deep operator network and relaxation methods}.
\newblock {\em arXiv preprint arXiv:2208.13273}, 2022.

\end{thebibliography}

\end{document}